\documentclass[a4paper,11pt]{amsart}
%\pdfoutput=1

\usepackage{xcolor}
\usepackage[utf8]{inputenc}
\usepackage{amsmath, amsthm, amssymb, mathtools}
\usepackage{amsfonts}
\usepackage{bm}
\usepackage{cancel}
\usepackage{color}
\usepackage{subfig}
\usepackage{stmaryrd}
\usepackage{xparse} % to define the double brace operator dbrace
\usepackage{nomencl} % list of symbols
\usepackage[colorlinks]{hyperref}
\usepackage{cleveref}
\usepackage{empheq} % function case numbering 
\usepackage{enumerate}
%\usepackage{lineno}
%\linenumbers

\usepackage{comment}
\usepackage[backend=biber,style=numeric, firstinits=true, maxalphanames=4, maxnames=99,minnames=99]{biblatex}
\renewbibmacro{in:}{}
\addbibresource{references.bib} % Specify your .bib file

\setlength{\bibhang}{0pt} % No hanging indent
\setlength{\bibitemsep}{\baselineskip} % Space between entries

%\ExecuteBibliographyOptions{doi=false,url=false,isbn=false}

\ExecuteBibliographyOptions{doi=false, isbn=false}

\DeclareSourcemap{
  \maps[datatype=bibtex]{
    \map{
      \step[fieldsource=journal, match={Numerische Mathematik}, replace={Numer. Math.} ]
    }
     \map{
      \step[fieldsource=journal, 
     match={SIAM Journal on Numerical Analysis}, replace={SIAM J. Numer. Anal.} ]
    }
    \map{
      \step[fieldsource=journal, 
     match={Journal of Computational Physics}, replace={J. Comput. Phys.} ]
    } 
    \map{
      \step[fieldsource=journal, 
     match={Acta numerica}, replace={Acta Numer.} ]
    }
  }
}

\usepackage{textcomp} % for ' and "
\usepackage{graphicx}
\usepackage{float}
\usepackage[export]{adjustbox}

\hypersetup{linkcolor=blue}
%\usepackage[symbols,nogroupskip,nonumberlist,sort=use]{glossaries-extra}
%
%\makenoidxglossaries

\makenomenclature % list of symbols

\numberwithin{equation}{section}

% ---------- command for appendix --------------

% ---------- command for main text --------------

\DeclarePairedDelimiter{\norm}{\lVert}{\rVert}
\DeclarePairedDelimiter{\jumpop}{\llbracket}{\rrbracket}
\DeclareSymbolFont{matha}{OML}{txmi}{m}{it}
\DeclareMathSymbol{\varv}{\mathbf}{matha}{118}

\theoremstyle{definition}

\newtheorem{Theorem}{Theorem}[section]

\newtheorem{Lemma}{Lemma}[section]

\newtheorem{Remark}{Remark}[section]

\newcommand{\restc}{\mathbin{\vrule height 1.6ex depth 0pt width
0.13ex\vrule height 0.13ex depth 0pt width 1.3ex}}

\newcommand\restr[2]{{% we make the whole thing an ordinary symbol
  \left.\kern-\nulldelimiterspace % automatically resize the bar with \right
  #1 % the function
  \vphantom{\big|} % pretend it's a little taller at normal size
  \right|_{#2} % this is the delimiter
  }}

\NewDocumentCommand{\dgal}{sO{}m}{%
  \IfBooleanTF{#1}
    {\dgalext{#3}}
    {\dgalx[#2]{#3}}%
}

\NewDocumentCommand{\dgalext}{m}{%
  \sbox0{%
    \mathsurround=0pt % just for safety
    $\left\{\vphantom{#1}\right.\kern-\nulldelimiterspace$%
  }%
  \sbox2{\{}%
  \ifdim\ht0=\ht2
    \{\kern-.625\wd2 \{#1\}\kern-.625\wd2 \}%
  \else
    \left\{\kern-.7\wd0\left\{#1\right\}\kern-.7\wd0\right\}%
  \fi
}

\NewDocumentCommand{\dgalx}{om}{%
  \sbox0{\mathsurround=0pt$#1\{$}%
  \sbox2{\{}%
  \ifdim\ht0=\ht2
    \{\kern-.625\wd2 \{#2\}\kern-.625\wd2 \}%
  \else
    \mathopen{#1\{\kern-.7\wd0 #1\{}
    #2
    \mathclose{#1\}\kern-.7\wd0 #1\}}
  \fi
}

% Koumatos commands

%\DeclareMathOperator{\Tr}{Tr} 
\newcommand{\R}{{\mathbb{R}}}

\crefformat{section}{\S#2#1#3} % see manual of cleveref, section 8.2.1
\crefformat{subsection}{\S#2#1#3}
\crefformat{subsubsection}{\S#2#1#3}

\numberwithin{equation}{section}

\def\<{{\langle }}
\def\>{{\rangle }}

\newcommand{\J}[1]{{  [\! [#1]\! ]  }}   

\newcommand{\Av}[1]{{  \{\!\!\{#1\}\!\!\}  }}

\date{\today}

\title[DG for non-convex energies]{\bf Convergence of Discontinuous Galerkin Methods for Quasiconvex and  Relaxed Variational Problems
%Models in Elasticity
%\\
%Convergence of Discontinuous Galerkin methods for quasiconvex and non-convex variational problems
}

%
%
%%\affil[1]{\footnotesize Department of Mathematics and Applied Mathematics, University of Crete}
%%\affil[2]{Institute of Applied \& Computational Mathematics, Foundation for Research \& Technology-Hellas}
%%\affil[3]{Department of Mathematics, University of Sussex}
%%%\author[G.\ Grekas]{G.\ Grekas}
%\author[K.\ Koumatos]{K.\ Koumatos}
%\author[C.\ Makridakis]{C.\ Makridakis}
%
% 
%
\author[G.\ Grekas]{G.\ Grekas $^{1}$}
\thanks{$^1$Computer, Electrical, Mathematical  Sciences and Engineering Division, KAUST}
\thanks {$^{2}$Department of Mathematics, University of Sussex.}
\thanks {$^{3}$Institute of Applied \& Computational Mathematics, Foundation for Research \& Technology-Hellas}
\thanks {$^{4}$Department of Mathematics and Applied Mathematics, University of Crete.}
\thanks {$^{5}$Faculty of Civil Engineering-Czech Technical University of Prague}
%Faculty of Mathematics, University of Vienna.}
\author[K.\ Koumatos]{K.\ Koumatos $^{2}$}
\author[C.\ Makridakis]{C.\ Makridakis $^{2,3,4}$}
\author[A.\ Vikelis]{A.\ Vikelis $^{5}$}
%\affil[1]{\footnotesize Institute of Applied \& Computational Mathematics, Foundation for Research \& Technology-Hellas.}
%\affil[2]{\footnotesize Aerospace Engineering and Mechanics, University of Minnesota, Minneapolis, USA.}
%\affil[3]{Department of Mathematics, University of Sussex.}
%\affil[4]{\footnotesize Department of Mathematics and Applied Mathematics, University of Crete.}
%\affil[5]{\footnotesize Faculty of Mathematics, University of Vienna.}

%
%%\author{
% G.\ Grekas$^{1,2}$, Konstantinos Koumatos$^3$, Charalambos Makridakis$^{1,2,3}$ }

%\author[GA]{Georgios Akrivis}
%\address{Department of Computer Science \& Engineering, University of Ioannina, 
%451$\,$10 Ioannina, Greece, and Institute of Applied and Computational Mathematics, 
%FORTH, 700$\,$13 Heraklion, Crete, Greece}
%\email {\href{mailto:akrivis@cse.uoi.gr}{akrivis{\it @\,}cse.uoi.gr}}  

\begin{document}

\maketitle

\begin{abstract}
In this work, we establish that discontinuous Galerkin methods are capable of producing reliable approximations for a broad class of nonlinear variational problems. In particular, we demonstrate that these schemes provide essential flexibility by removing inter-element continuity while also guaranteeing convergent approximations in the quasiconvex case. Notably, quasiconvexity is the weakest form of convexity pertinent to elasticity. Furthermore, 
we show that in the non-convex case discrete minimisers converge to minimisers of the relaxed problem. In this case, the minimisation problem corresponds to the energy defined by the \emph{quasiconvex envelope} of the original energy. Our approach covers all discontinuous Galerkin formulations known to converge for convex energies. This work addresses an open challenge in the vectorial calculus of variations: developing and rigorously justifying numerical schemes capable of reliably approximating nonlinear energy minimization problems with potentially singular solutions, which are frequently encountered in materials science.
\end{abstract}

%%%%%%%%%%%%%%%%%%%%%%%%%%%%%%%%%%%%%%%%%%%%%%%%%%%%%%%%%%%%%%%%%%%%%%%
%%%%%%%%%%%%%%%%%%%%%%%%%%%%%%%%%%%%%%%%%%%%%%%%%%%%%%%%%%%%%%%%%%%%%%%
%%%%%%%%%%%%%%%%%%%%%%%%%%%%%%%%%%%%%%%%%%%%%%%%%%%%%%%%%%%%%%%%%%%%%%%

\section{Introduction}
%In this work we show that discontinous Galerkin methods are capable to produce reliable approximations for an important class of nonlinear variational problems. Our goal is twofold: we show that such methods work for  \emph{quasiconvex energies} and furthermore if no 
%convexity assumption is made on the energies, the approximations still converge to the minimisers of the \emph{relaxed} problem, i.e., 
%to the energy minimisation problem with energy given by the \emph{quasiconvex envelope} of the original energy. 
%We therefore, contribute to a long-standing algorithmic and theoretical issue in nonlinear calculus of variations which is the design and mathematical justification of schemes capable to approximate reliably nonlinear energy minimisation problems arising in materials science.    

In this work, we establish that discontinuous Galerkin methods are capable of producing reliable approximations for a broad class of nonlinear variational problems. Our aim is twofold: first, to show that these methods  are applicable to variational problems involving \emph{quasiconvex energies}; and second, to establish that, even in the absence of convexity assumptions, the approximations converge to the minimisers of the \emph{relaxed} problem. In this case, the minimisation problem corresponds to the energy defined by the \emph{quasiconvex envelope} of the original energy.

%Our study has a twofold objective: first, we establish that these methods are effective for problems involving \emph{quasiconvex energies}; second, we show that even in the absence of convexity assumptions on the energies, the approximations converge to the minimisers of the \emph{relaxed} problem. Specifically, they converge to the energy minimisation problem where the energy is defined by the \emph{quasiconvex envelope} of the original energy.

By addressing this, we contribute to a long-standing algorithmic and theoretical challenge in the vectorial calculus of variations: the development and rigorous justification of numerical schemes capable of reliably approximating nonlinear energy minimisation problems with possibly singular solutions, which commonly arise in materials science.

%As discussed in detail in, \cite {Ball_focm}, \cite{cc_review}, although standard finite element methods is a natural discrete space to seek minimisers of such problems, it seems that the interlement continuity required in the discrete spaces do not allow enough flexibility for these spaces to be used  as a method of choice. A manifestation of this issue is the computation of minimisers in variational problems    where the \emph{Lavirentev} phenomenon may occur. As first  observed in \cite {Ball_and_Knowles} standard finite elements may converge to energy values strictly larger than the exact energy of the problem, thus yielding physically irrelevant approximations. 

The insightful observation by J.M. Ball in \cite{Ball_computation_2001}—``minimisers of variational problems may have singularities, \emph{but natural numerical schemes may fail to detect them}''—remains highly relevant today.  As discussed in detail in \textcite{Ball_computation_2001} and \textcite{Cc_review_2001}, although standard finite element methods provide a natural discrete framework for seeking minimisers of such problems, the inter-element continuity imposed on the discrete spaces appears to lack sufficient flexibility to make these methods the optimal choice. This limitation is particularly evident in the computation of minimisers for variational problems where the \emph{Lavrentiev phenomenon} may occur. As first noted in \cite{Ball_Knowles_1987}, standard finite element methods can converge to energy values that are strictly greater than the exact energy of the problem, leading to physically irrelevant approximations. 

%In the finite element literature there are alternatives to standard finite elements allowing more flexibility, the nonconforming finite elements and the discontinuous Galerkin methods. \textcite{Cr_Rav_original_1973} spaces originally developed for Stokes and Naiver-Stokes equations  were used in \textcite{Ortner_CR_2011} (see also  \cite{Ortner_Praet_CR_2011}, \cite{Ortner_CR2_2022}), to establish that 
%these spaces allow enough flexibility to overcome the limitations of the standard methods when tested against the {Lavrentiev gap phenomenon}. 
%Totally discontinuous Galerkin methods are particularly popular for nonlinear PDEs. They require special energy functionals 
%which are not obvious to construct, \cite{ten2006discontinuous}, \cite{GMKV2023}, 
%while they allow high-order approximation capacity. $\Gamma-$Convergence for convex energies for the energy functional  of \cite{ten2006discontinuous}   were 
%established in \textcite{buffa2009compact}.  

In the finite element literature, there are alternatives to standard finite elements that offer greater flexibility, such as nonconforming finite elements and discontinuous Galerkin methods. The spaces introduced by \textcite{Cr_Rav_original_1973}, originally developed for the Stokes and Navier-Stokes equations, were employed by \textcite{Ortner_CR_2011} (see also \cite{Ortner_Praet_CR_2011} and \cite{Ortner_CR2_2022}) to demonstrate that these spaces provide sufficient flexibility to overcome the limitations of standard methods, particularly in addressing the Lavrentiev gap phenomenon.

Discontinuous Galerkin methods, which are completely discontinuous, have become especially popular for nonlinear PDEs. While these methods offer high-order approximation capabilities, 
and great flexibility, they require specially constructed energy functionals, which are often nontrivial to design  and analyse \cite{ten2006discontinuous}, \cite{buffa2009compact}, \cite{GMKV2023}. 
 $\Gamma$-convergence for convex energies associated with the energy functional proposed in \cite{ten2006discontinuous} was established in \textcite{buffa2009compact}, while the authors of this paper derived $\Gamma$-convergence for the functional introduced in  \cite{GMKV2023}.

%\textcolor{blue}{
%Despite the progress mentined above, the fact that possibly noncontinuous discrete approximations should be  allowed comes mith a price:  the analytical treatment of energies involving the discrete gradient is very much based on the convexity of $W$. More precisely, due to the lack of the gradient structure of the discrete gradient i.e. ${\rm curl}G_h(u_h)\neq 0$, the lower semicontinuity of the energy term holds for convex intergands $W$ but it is not clear when $W$ assumed to be quasiconvex. This is a major technical issue preventing further advancment on the design and analysis of approximation methods which can be used in elasticity and other variational problems. 
%
Despite the progress outlined above, allowing possibly noncontinuous discrete approximations comes at a cost: the analytical treatment of energies involving the discrete gradient heavily relies on the convexity of 
 $W.$ Specifically, due to the absence of a true gradient structure in the discrete gradient —i.e., 
${\rm curl}\, G_h(u_h)\neq 0,$ $G_h(u_h)$ being an appropriate ``discrete gradient"---  the lower semicontinuity of the energy term is guaranteed for convex integrands 
$W,$ but remains uncertain when 
$W$ is  quasiconvex. This represents a significant technical challenge that hinders further advancements in the design and analysis of approximation methods applicable to elasticity and other variational problems.

In this work, we resolve this issue by showing that discontinuous Galerkin schemes not only provide the necessary flexibility by eliminating inter-element continuity but also ensure convergent approximations in the quasiconvex case.
To compensate the lack of differential structure and regain lower semicontinuity, we transition to the limit of reconstructed approximations from \textcite{karakashian2003posteriori} and demonstrate that the associated error terms can be adequately controlled. Furthermore, 
we show that in the non-convex case discrete minimisers converge to minimisers of the relaxed problem.

\subsubsection*{Plan of the paper.} For $\Omega\subset\mathbb{R}^d$, we consider the minimisation problem associated with the energy
\begin{equation}\label{eq:energyintro}
\mathcal{E}(u) = \int_\Omega W(\nabla u(x))\,dx,\quad\text{where}\,\,\, u:\Omega\to \R^N,\,\,u = u_0,\mbox{ on }\partial\Omega, 
\end{equation}
over an admissible set of functions which is a subset of a Sobolev space $W^{1,p}$ and  where  $W:\R^{N\times d}\to\R.$ Detailed assumptions on the corresponding energy minimisation problem are in Section 3.
We start with some preliminary material in Section \ref{Se:Prelim} including standard notation on the discontinuous Galerkin spaces $V_h(\Omega)$. Lemma \ref {lemma:tekar} is an extension in the $L^p$ setting of the classical result in \cite{karakashian2003posteriori} regarding the error between 
$u_h \in V_h(\Omega)$ and its   continuous reconstruction  $w_h \in V_h(\Omega) \cap W^{1,p}(\Omega). $ This is a crucial tool in our analysis.  

In Section \ref{Se:Convergence}
 we introduce our discontinuous Galerkin method \cite{GMKV2023} and we show that, in the absence of convexity, our discrete energies $\Gamma$-converge to the relaxed problem. Similarly with \cite{GMKV2023}, a penalty term ensures the discrete stability, but in this case it is appropriately  modified to include also energies which are not convex.  Our results cover both discontinuous Galerkin formulations known to converge for convex $W.$ The proof for the method based on discrete gradients \cite{ten2006discontinuous} is postponed  to the Section \ref{Se:discrG}. The discrete gradients are defined through standard lifting operators, defined in the same section. Although   the use of the lifting operators and the discrete gradients was the main tool in the analysis in the convex case, as mentioned, this is no longer true 
 if we would like to consider   quasiconvex integrands. The main obstacle in proving the convergence of DG schemes for quasiconvex energies lies in the fact that the finite element function $\nabla_h y_h$ used to approximate the gradient of an admissible function $\nabla y$ is not itself a gradient, i.e. ${\rm curl} \nabla_h y_h \neq 0$, and hence in general the lower semicontinuity of $\mathcal{E}$ with respect to weak convergence in $W^{1,p}$ is not true. That is, for a quasiconvex $W$, the inequality
\[
\liminf_{h\to 0}\int_\Omega W(\nabla_h y_h) \geq \int_\Omega W(\nabla y)
\] 
does not hold in general when
\[
\nabla_h y_h \rightharpoonup \nabla y,\quad \mbox{ in }L^p\mbox{ as }h\to 0.
\]
The main results of this paper are Theorems \ref{thm:main} and \ref{thm:main_dGr} where we show that the discrete minimisers 
of disontinuous Galerkin schemes of \cite{GMKV2023} and  \cite{ten2006discontinuous}  converge to a minimiser of 
%DG numerical scheme to approximate the minimisers of the relaxation of vectorial problems in the calculus of variations; in particular, if the energy density is quasiconvex, this is the first DG scheme to approximate quasiconvex problems, i.e. consider the minimisation problem
%\[
%\mathcal{E}(y) = \int_\Omega W(\nabla y(x))\,dx
%\]
the relaxed problem defined as
\[
\mathcal{E}^{rel}(y) = \int_\Omega W^{qc}(\nabla y(x))\,dx
\]
where $W^{qc}$ denotes the quasiconvex envelope of $W$, i.e. the largest quasicnovex function that lies below $W$. In particular, if $W$ is quasiconvex, then $W = W^{qc},$ and as a consequnce we establish convergence of the discrete mimimisers to a minimiser of the continuous quasiconvex energy.  

In Section \ref{sec:computations}, we provide detailed numerical evidence using specifically designed examples. We begin with a problem involving polyconvex energy and thoroughly analyze the impact of the new penalty term introduced in this work (see Section 3.1) in comparison to the penalty term employed in \cite{GMKV2023}.

%In Section \ref{sec:computations}, we present detailed numerical evidence by considering specially designed examples. First we consider a problem with polyconvex 
%energy. We examine in detail the effect of the new penalty term introduced herein, see Section 3.1, to the penalty used in \cite{GMKV2023}. 

Subsequently, we examine a frame-indifferent two-well strain energy function where, under specific boundary conditions, a phase mixture emerges. This energy leads to increasingly fine microstructures as the mesh size 
$h$  is reduced, approaching the infimum of the total potential energy. An intriguing aspect of our experiments, consistent with the theoretical results, is the observation that for this problem, minimizing sequences with distinct laminate structures exist. Nevertheless, all such sequences strongly converge in 
$L^2$   to the same function. This behavior can be attributed to the diminishing amplitude of gradient oscillations, which become negligible as 
 $h$ decreases.

%Next, we consider a frame indifferent two-well strain energy function where under specific boundary conditions a phase mixture
%emerges. This energy gives rise to  finer and finer microstructure, approaching the infimum of the total potential energy as we decrease the 
%mesh size $h$.  A particularly interesting feature of our experiments, which aligns with the theoretical results, is that for the this problem, we observe the existence of minimising  sequences with different laminate structures. Nevertheless, all of these sequences strongly converge in $L^2$   to the same function. This behavior can be attributed to the diminishing amplitude of gradient oscillations, which become negligible   for $h$ small enough.

An additional interesting observation is that, by leveraging Dacorogna's formula for the quasiconvex envelope of a general energy density 
$W$, our scheme can approximate 
$W^{qc}(F)$ pointwise for each 
$F.$ This suggests that quasiconvex envelope approximations can be constructed using discrete energy minimisation techniques. While the full implementation of this idea lies beyond the scope of the current paper, we present preliminary results for this pointwise approximation in Figures~\ref{fig:minimizing_seq_def}, 6, 7. As mentioned, these numerical experiments are for non-quasiconvex, double-well potentials 
$W,$ which are typical in modeling phase transitions in solids.

%An interesting additional observation is that by using Dacorogna's formula for the quasiconvex envelope of a general energy density $W$, our scheme is able to approximate $W^{qc}(F)$ for each $F$, i.e. pointwise. This suggests that approximations for the quasiconvex envelope can be built using discrete energy minimisation techniques. The actual implementation of this idea is outside the scope of the current paper but we present some preliminary results for this pointwise approximation, see Figure~\ref{fig:minimizing_seq_def}.  In particular, we show numerical experiments for non-quasiconvex, double-well potentials $W$ typical in phase transitions in solids.
% The Lavrentiev phenomenon can also be studied with DG schemes. In the current work, the assumed growth conditions exclude the Lavrentiev phenomenon, however, it motivates its study in future work.
%

\subsubsection*{Further references.} The systematic study of numerical methods for energy minimisation problems arising in elasticity started in the late 80-90's with the works of \textcite{Ball_Knowles_1987},
\textcite{Luskin_Col_Kind_1991},
\textcite{Luskin_Col_nonconvex_1991},
\textcite{Pedregal_num_1996},
\textcite{CcPl_adaptive_scalar_nonc_1998},
\cite{CcPl_phas_trans_elast_2000},
\cite{CcPl_scalar_micros_1997},
\textcite{CcRubiceck_numer_young_micros_2000}. For reviews, open problems and remarks of the effectivity of various numerical approaches, see e.g., \cite{Luskin_acta_microstr_1996}, \cite{Ball_computation_2001}, \cite{Cc_review_2001}, \cite{Pedregal_book_2000}, \cite{Cc_remarks_nonc_2005}, \cite{Bartels_book} and their references.   Early approaches emphasising the benefits of potential flexibility offered by nonconforming or discontinuous methods were discussed in   \cite{Luskin_acta_microstr_1996}, \cite{Gobbert_Prohl_DG_1999}.  Crouzeix-Raviart nonconforming spaces are used in total variation and other convex minimisation problems in \cite{Bartels_TV_2012}, \cite{Bartels_convex_2023}. 
Discontinuous Galerkin schemes for problems with higher gradients were considered in \cite{BonitoNoch_Ntog_2021},  \cite{grekas2022approximations}.
To obtain feasible discrete solutions for the relaxed problem using its definition, it is necessary to compute approximations of polyconvex or quasiconvex envelopes. Such approaches have been explored in \cite{Kru_1998}, \cite{Dolzmann_Walkington_2000}, \cite{BartCc_et_relaxed_micros_2004}, \cite{Bartels_approx_poly_2005}, \cite{Peterseim_et_conv_micr_2024}, \cite{Peterseim_et_2024}, 
 among others, along with the references therein.

\section{Preliminaries.}\label{Se:Prelim}

 \subsubsection{Finite element spaces and trace operators} 
 For simplicity, let us assume that $\Omega$ is a polytope domain, and consider a triangulation $T_h$ of $\Omega$ of mesh size $h>0$. We seek discontinuous Galerkin approximations to \eqref{eq:energyintro}. To this end, we introduce the space of $L^p$ piecewise polynomial functions
\begin{equation}\label{eq:DGspace}
V^q_h(\Omega):=\left\{v\in L^p(\Omega)\,:\,\restr{v}{K}  \in \mathbb{P}_q(K), K \in T_h \right\},
\end{equation}
where $\mathbb{P}_q(K)$ denotes the space of polynomial functions on $K$ of degree $q\in \mathbb{N}$ which is henceforth fixed. Moreover, we adopt the following standard notation:
The \textit{trace} of 
functions in $V^q_h(\Omega)$ belongs to the space
\begin{align}
 T(E_h) := \Pi_{e\in E_h} L^p(e),
\end{align}
where $E_h$ is the set of mesh edges.
The \textit{average} and \textit{jump} operators over
$T(E_h)$ are defined by:
\begin{equation}
 \begin{aligned}
\dgal{ \cdot } :& T(E_h) \mapsto L^p(E_h) \\
  &\dgal{ w } := \left\{
        \begin{array}{ll}
             \frac{1}{2} (\restr{w}{K_{e^+} }  + \restr{w}{K_{e^-} }),
             & \quad \text{for } e \in E_h^i \\
             w, & \quad \text{for } e \in E_h^b,
        \end{array}
    \right.
 \end{aligned}
\label{average_operator}
\end{equation}
\begin{equation}
 \begin{aligned}
 \jumpop{ \cdot }
 :& T(E_h) \mapsto L^p(E_h) \\
  &\jumpop{ v }	 := \restr{v}{K_{e^+} }  - \restr{v}{K_{e^-} },
             & \quad \text{for } e \in E_h^i .
  % \left\{
  %       \begin{array}{ll}
  %            \restr{v}{K_{e^+} }  - \restr{v}{K_{e^-} },
  %            & \quad \text{for } e \in E_h^i \\
  %            v, & \quad \text{for } e \in E_h^b.
  %       \end{array}
  %   \right.
 \end{aligned}
\label{jump_operator}
\end{equation}
We often apply the jump operator on dyadic products $v\otimes n$, for $v\in \R^N$, $n\in \R^d$ and we write
\begin{equation}
 \begin{aligned}
  &\jumpop{ v\otimes n_e }	 := \restr{v\otimes n_{e^+}}{K_{e^+} }  + \restr{v\otimes n_{e^-}}{K_{e^-} },
             & \quad \text{for } e \in E_h^i  
    %          \left\{
    %     \begin{array}{ll}
    %          \restr{v\otimes n_{e^+}}{K_{e^+} }  + \restr{v\otimes n_{e^-}}{K_{e^-} },
    %          & \quad \text{for } e \in E_h^i \\
    %          v, & \quad \text{for } e \in E_h^b.
    %     \end{array}
    % \right.
 \end{aligned}
\label{jump_operator_tensor}
\end{equation}
where $(v\otimes n)_{ij} = v_in_j$, $K_{e^+}$, $K_{e^-}$ are the elements that share the internal edge $e$ with corresponding outward pointing normals
$n_{e^+}, n_{e^-}$, and $E_h^i$, $E_h^b$ denote respectively the set of internal and boundary edges.

The space $V^q_h(\Omega)$ is equipped with the norm
\[
\|v\|^p_{W^{1,p}(\Omega,T_h)} = \|v\|^p_{L^p(\Omega)} + |v|^p_{W^{1,p}(\Omega,T_h)},
\]
where the seminorm $|\cdot|_{W^{1,p}(\Omega,T_h)}$ is defined by
\begin{equation}\label{eq:seminorn}
|v|^p_{W^{1,p}(\Omega,T_h)} := \sum_{K\in T_h} \int_K |\nabla v|^p +\sum_{e\in E_h^i} \frac{1}{h^{p-1}}\int_{e} |\jumpop{v}|^p.
\end{equation}

We note that in the above expression $\nabla v$ denotes the gradient, in the classical sense, of the polynomial function $v$ over the set $K$. As a function in $SBV(\Omega)$, $v\in V^q_h(\Omega)$, admits a distributional gradient which is a Radon measure and decomposes as
\begin{align}
Dv = \nabla v\mathcal{L}^d\restc\Omega + \jumpop{v}\otimes n_{e} \mathcal{H}^{d-1}\restc E_h
\label{distrib_gradient}
\end{align}
where $\nabla v$ denotes the approximate gradient. To avoid confusion, we denote the above approximate gradient by $\nabla_h v$, i.e. we write $\nabla_h: V^k_h \to V^{k-1}_h$ for the operator defined by
\[
\restr{\left(\nabla_h v\right)}{K} = \nabla (\restr{v}{K}).
\]
In particular, it holds that
\[
\int_{\Omega} W(\nabla_h u_h) = \sum_{K\in T_h} W(\nabla u_h).
\]

\subsubsection{Approximation results and Poincar\'{e} inequalities in  DG spaces.}

Here we approximate discontinuous functions $u_h \in V_h(\Omega)$ by  continuous piecewise
functions $w_h \in V_h(\Omega) \cap W^{1,p}(\Omega)$ and prove their error estimates by 
generalising the case of $p=2$ presented in \cite[Theorem 2.2]{karakashian2003posteriori}. 
From this result we derive a  Poincar\'{e} inequality for functions in $V_h(\Omega)$ 
while these estimates are constituents for the $\Gamma-$convergence result when the energy function $W$ is quasiconvex. First we generalise the algebraic inequality of \cite[Lemma 2.2]{karakashian2003posteriori}
\begin{Lemma}\label{lemma:tekar} Given $n$ real numbers $c_1, c_2, ..., c_n$, let $m = \frac{1}{n} \sum_{i=1}^{n} c_i$.
If $r \ge1$, then 
\begin{align*}
\sum_{i=1}^n |c_i - m|^r \le C \sum_{i=1}^{n-1} |c_{i+1} - c_i|^r,
\end{align*}
where $C$ depends only on $n$ and $r$. 
\end{Lemma}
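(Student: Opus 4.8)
The plan is to express each deviation $c_i - m$ as a linear combination, with coefficients bounded solely in terms of $n$, of the consecutive differences $d_k := c_{k+1} - c_k$, and then apply the elementary power-mean inequality to pass to the $r$-th powers.

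First I would write, using that $m$ is the arithmetic mean,
\[
c_i - m = \frac{1}{n}\sum_{j=1}^n (c_i - c_j).
\]
For each pair $(i,j)$ the difference $c_i - c_j$ telescopes over consecutive indices: if $j<i$ then $c_i-c_j=\sum_{k=j}^{i-1}d_k$, if $j>i$ then $c_i-c_j=-\sum_{k=i}^{j-1}d_k$, and the term vanishes when $j=i$. Collecting the coefficient of each $d_k$ after averaging over $j$ gives $c_i - m = \sum_{k=1}^{n-1} a_{i,k}\, d_k$, where each $a_{i,k}$ is an average of numbers in $\{-1,0,1\}$, hence $|a_{i,k}|\le 1$. (No explicit formula for $a_{i,k}$ is needed; only this uniform bound matters.)

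Next I would estimate, by the triangle inequality followed by the discrete Jensen / power-mean inequality $\bigl(\sum_{k=1}^{n-1}x_k\bigr)^r\le (n-1)^{r-1}\sum_{k=1}^{n-1}x_k^r$, valid for $x_k\ge 0$ and $r\ge 1$ as a consequence of convexity of $t\mapsto t^r$,
\[
|c_i-m|^r \le \Bigl(\sum_{k=1}^{n-1}|d_k|\Bigr)^r \le (n-1)^{r-1}\sum_{k=1}^{n-1}|d_k|^r .
\]
Summing over $i=1,\dots,n$ yields the claim with $C = n(n-1)^{r-1}$, which depends only on $n$ and $r$.

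The argument is entirely elementary and I do not anticipate a genuine obstacle; the only point requiring a little care is the bookkeeping that produces the uniform bound $|a_{i,k}|\le 1$ on the telescoping coefficients, guaranteeing that $C$ is independent of the $c_i$. As an alternative one may argue more abstractly: both $\bigl(\sum_i|c_i-m|^r\bigr)^{1/r}$ and $\bigl(\sum_k|d_k|^r\bigr)^{1/r}$ descend to norms on the $(n-1)$-dimensional quotient $\mathbb{R}^n/\mathrm{span}\{(1,\dots,1)\}$ (the second because $c\mapsto(d_1,\dots,d_{n-1})$ is linear with kernel exactly the constants), so they are equivalent with a constant depending only on $n$ and $r$; this is shorter but does not produce an explicit $C$.
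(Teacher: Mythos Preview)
Your proof is correct and follows essentially the same route as the paper: write $c_i-m$ as an average of the pairwise differences $c_i-c_j$, telescope each of these into consecutive differences $d_k$, and use the convexity inequality $(\sum x_k)^r\le (n-1)^{r-1}\sum x_k^r$. The only cosmetic difference is that you first collapse everything into $|c_i-m|\le\sum_k|d_k|$ and apply Jensen once, whereas the paper applies Jensen twice (first to the average of $c_i-c_j$, then to each telescoped sum); your organisation yields the explicit constant $C=n(n-1)^{r-1}$, and your closing norm-equivalence remark is a nice nonconstructive alternative not in the paper.
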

\begin{proof}
Here we follow partially the proof of \cite[Lemma 2.2]{karakashian2003posteriori}.  For $r\ge1$
one can use  Jensen's inequality for an upper bound
\begin{align*}
|c_j - m|^r = \frac{1}{N^r} \left| \sum_{i=1}^n (c_j -c_i) \right|^r \lesssim \sum_{i=1}^n |c_j - c_i|^r.
\end{align*}
Now, summing over $j$  we deduce that
\begin{equation*}
\begin{aligned}
&\sum_{j=1}^n |c_j - m|^r \lesssim \sum_{j=1}^n \sum_{i=j+1}^n |c_j -c_i|^r = 
\sum_{j=1}^n \sum_{i=j+1}^n \left| \sum_{k=i}^{j-1}(c_{k+1} -c_k)\right|^r \\
\lesssim & \sum_{j=1}^n \sum_{i=j+1}^n \sum_{k=i}^{j-1} \left|  c_{k+1} -c_k \right|^r \lesssim \sum_{j=1}^n |c_{j+1} - c_j|^r. 
\end{aligned}
\end{equation*}

\end{proof}
Using the above Lemma, we extend the approximation result \cite[Theorem 2.2]{karakashian2003posteriori} for the case $p\geq 2$.
% In particular we show that every element of $V_h(\Omega)$ can be approximated by continuous 
% piecewise polynomials of the space $V_h(\Omega) \cap W^{1,p}(\Omega)$ that satisfy Dirichlet-type 
% boundary conditions. 
We omit the details of the proof emphasising more on the points where our case differs from the classical case $p=2$.

\begin{Lemma}\label{lemma:karap} Let $u_h \in V_h(\Omega)$, then for any multi-index $\alpha$ with $|\alpha|=$$0,\,1$ there exists  
$w_h\in V_h(\Omega) \cap W^{1,p}(\Omega)$ (continuous finite element space), such that
\begin{align}
\sum_{K \in T_h}\norm{\nabla^\alpha u_h - \nabla^\alpha w_h}_{L^p(K)}^p
\lesssim \sum_{e\in E^i_h} h_e^{1-p|\alpha|} \int_e |\jumpop{u_h}|^p ds,
\end{align}
and for $\Gamma \subset \partial \Omega$:
\begin{align}
\norm{\nabla^\alpha  u_h -  \nabla^\alpha w_h}_{L^p(\Gamma)}^p
\lesssim 
\sum_{e\in E^{i}_h} h_e^{-p|\alpha|} \int_e |\jumpop{u_h}|^p.
\label{eq:Gammabnd}
\end{align}
\end{Lemma}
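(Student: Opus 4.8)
The plan is to adapt the averaging (Oswald-type) construction of \cite[Theorem 2.2]{karakashian2003posteriori}, the only genuinely new ingredient being the $L^p$ replacement of its elementary $p=2$ algebraic inequality, namely Lemma~\ref{lemma:tekar}; everything else amounts to tracking powers of $h$ through the affine scaling. I would fix the Lagrange nodal basis of $V_h(\Omega)=V_h^q(\Omega)$ with global node set $\mathcal N_h$, so that the continuous subspace $V_h(\Omega)\cap W^{1,p}(\Omega)$ consists precisely of those piecewise polynomials whose nodal values are single-valued. For $\nu\in\mathcal N_h$ set $\omega_\nu=\{K\in T_h:\nu\in\overline K\}$ and $n_\nu=\#\omega_\nu$, and define $w_h\in V_h(\Omega)\cap W^{1,p}(\Omega)$ by averaging,
\[
w_h(\nu)=\frac1{n_\nu}\sum_{K\in\omega_\nu}(\restr{u_h}{K})(\nu),\qquad \nu\in\mathcal N_h.
\]
Nodes interior to a single element contribute nothing to $u_h-w_h$; a node interior to an internal face $e\in E_h^i$ has $n_\nu=2$, so $(\restr{u_h}{K})(\nu)-w_h(\nu)=\pm\tfrac12\jumpop{u_h}(\nu)$; the vertex-type nodes are exactly the ones for which Lemma~\ref{lemma:tekar} is needed.

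On a fixed $K\in T_h$, since $\restr{(u_h-w_h)}{K}\in\mathbb P_q(K)$, I would use finite-dimensional norm equivalence on the reference simplex together with the shape-regular affine change of variables to get, for $|\alpha|\le1$,
\[
\norm{\nabla^\alpha u_h-\nabla^\alpha w_h}_{L^p(K)}^p\ \lesssim\ h_K^{\,d-p|\alpha|}\sum_{\nu\in\mathcal N(K)}\bigl|(\restr{u_h}{K})(\nu)-w_h(\nu)\bigr|^p,
\]
and, for any face $F\subset\partial K$, the analogous trace estimate with exponent $d-1-p|\alpha|$ in place of $d-p|\alpha|$. Next, for each vertex-type node $\nu$ I would order the patch $\omega_\nu=\{K_1,\dots,K_{n_\nu}\}$ so that consecutive elements share a face of $E_h^i$ containing $\nu$ (in dimension $d\ge3$ one instead telescopes along a spanning tree of the face-adjacency graph of $\omega_\nu$, which is connected for a shape-regular triangulation), and apply Lemma~\ref{lemma:tekar} with $r=p$ to obtain
\[
\sum_{i=1}^{n_\nu}\bigl|(\restr{u_h}{K_i})(\nu)-w_h(\nu)\bigr|^p\ \lesssim\ \sum_{\substack{e\in E_h^i\\ \nu\in e}}|\jumpop{u_h}(\nu)|^p,
\]
with a constant depending only on $p$ and the maximal number of elements meeting at a node.

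Finally I would reassemble: summing the elementwise bound over $K\in T_h$, using $h_K\simeq h_e$ for elements in the patch of a face and the fact that each $|\jumpop{u_h}(\nu)|^p$ is counted a bounded number of times, and reindexing the resulting double sum by $e\in E_h^i$, gives
\[
\sum_{K\in T_h}\norm{\nabla^\alpha u_h-\nabla^\alpha w_h}_{L^p(K)}^p\ \lesssim\ \sum_{e\in E_h^i}h_e^{\,d-p|\alpha|}\sum_{\nu\in\mathcal N(e)}|\jumpop{u_h}(\nu)|^p.
\]
Since $\restr{\jumpop{u_h}}{e}$ is a polynomial of degree $q$ on the $(d-1)$-dimensional face $e$, norm equivalence and scaling give $\sum_{\nu\in\mathcal N(e)}|\jumpop{u_h}(\nu)|^p\simeq h_e^{-(d-1)}\int_e|\jumpop{u_h}|^p$, and substituting produces the exponent $h_e^{\,1-p|\alpha|}$ of the first claimed estimate. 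Running the same reindexing over the boundary faces contained in $\Gamma$, starting from the trace version of the elementwise bound (exponent $d-1-p|\alpha|$), yields \eqref{eq:Gammabnd}. I expect the only genuine subtlety — beyond the routine bookkeeping of $L^p$ scaling powers — to be the combinatorial ordering of the patch $\omega_\nu$ used to convert nodal deviations from the mean into sums of face jumps; this is precisely where Lemma~\ref{lemma:tekar} replaces the Hilbert-space orthogonality available when $p=2$.
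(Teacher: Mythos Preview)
Your proposal is correct and follows essentially the same route as the paper: the Oswald-type nodal averaging to define $w_h$, the scaling $h_K^{d-p|\alpha|}$ (respectively $h_e^{d-1-p|\alpha|}$ on faces) for the local $L^p$ norms, the ordering of the patch $\omega_\nu$ to invoke Lemma~\ref{lemma:tekar}, and the inverse/norm-equivalence step converting nodal jump values into $h_e^{-(d-1)}\int_e|\jumpop{u_h}|^p$ all match the paper's argument. The only cosmetic difference is that the paper phrases the last step via the $L^\infty$ norm of the jump followed by an inverse inequality, whereas you use norm equivalence on the face directly; these are the same estimate.
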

\begin{proof}
    We proceed similarly to the proof of  \cite{karakashian2003posteriori}
    adopting partially their notation. For $K\in T_h$ 
    the set of the Lagrange nodes of $K$ is denoted by $\mathcal{N}_K= \{x_{K,j}: j=1, ..., m \}$  
    and  $\{\phi_K^{(x_{K,j})}:\,j=1,..,m\}$ the corresponding basis functions. In addition to this, we set 
    $\mathcal{N}:=\cup_{K\in\mathcal{T}_h}\mathcal{N}_K$. 
    % and \textcolor{red}{(should we use only 
    % $\mathcal{N}$?) } $\mathcal{N}_i$,\,$\mathcal{N}_b$ the sets of
    % the interior and boundary Lagrange nodes respectively.
    For each $\nu\in \mathcal{N}$, 
    let $\omega_\nu:=\{K\in T_h:\,\nu\in K\}$ and $|\omega_\nu|$ its cardinality. Now, we write 
    \[
    u_h(x):=\sum_{\nu\in \mathcal{N}}\sum_{K\in \omega_\nu}a_K^{(\nu)}\phi_K^{(\nu)}(x)
    % u_h(x):=\sum_{\nu\in \mathcal{N}}\sum_{K\in \omega_\nu}u_h|_K(\nu)\phi^{(\nu)}(x)
    \]
    and at every node we define the continuous $w_h$ to be the average of the values of $u_h$, i.e. 
  \begin{align*}
	w_h=\sum_{\nu\in\mathcal{N}}\beta^{(\nu)}\phi^{(\nu)},\,\,\,\text{where}\,\,\,	\beta^{(\nu)}=
				\frac{1}{|\omega_\nu|}\sum_{K\in\omega_\nu} a_K^{(\nu)},  \,\,\, \nu\in \mathcal{N}       ,
		\end{align*}
  and $\phi^{(\nu)}|_K=\phi_K^{(\nu)}$. 
  After a simple scaling argument we know that $\|\nabla^{\alpha} \phi_K^{(\nu)}\|^p_{L^p(K)}\lesssim h_K^{d-p|\alpha|}$. 
  Hence, employing this upper bound and Jensen's inequality we obtain
\begin{align*}
  &\sum_{K \in T_h}\int_K|\nabla^\alpha u_h - \nabla^\alpha w_h|^p dx
   =\sum_{K \in T_h}\int_K\big|\sum_{\nu \in \mathcal{N}_K}(\alpha_K^{(\nu)}-\beta^{(\nu)})
   \nabla^\alpha\phi_K^{(\nu)}(x)\big|^p dx
   \\
  &\lesssim  \sum_{K \in T_h} \sum_{\nu \in \mathcal{N}_K}|\alpha_K^{(\nu)}-\beta^{(\nu)}|^p \int_K|\nabla^\alpha\phi_K^{(\nu)}(x)|^{p} dx 
  \lesssim  \sum_{K \in T_h} \sum_{\nu \in \mathcal{N}_K}
  h_K^{d-p|\alpha|} |\alpha_K^{(\nu)}-\beta^{(\nu)}|^p  
  \\
  &
  = \sum_{\nu \in \mathcal{N}} \sum_{K \in \omega_\nu} h_K^{d-p|\alpha|} |\alpha_K^{(\nu)}-\beta^{(\nu)}|^p  
  \lesssim \sum_{\nu \in \mathcal{N}} h_\nu^{d-p|\alpha|} \sum_{K \in \omega_\nu} 
  |\alpha_K^{(\nu)}-\beta^{(\nu)}|^p,
  \end{align*}
  where $h_\nu=\max_{K\in\omega_\nu}h_K$. 
  Following the same strategy as in \cite{karakashian2003posteriori} for the ordering of $\omega_\nu$ and applying instead Lemma \ref{lemma:tekar} 
  we infer that
  \begin{align*}
      \sum_{K \in T_h}\norm{\nabla^\alpha u_h - \nabla^\alpha w_h}_{L^p(K)}^p 
      \lesssim \sum_{e\in E^i_h}h_e^{d-p|\alpha|}|\jumpop{u_h}|^p_{L^\infty(e)} 
      \lesssim \sum_{e\in E^i_h}h_e^{1-p|\alpha|}|\jumpop{u_h}|^p_{L^p(e)},
  \end{align*}
  where for the last estimate an inverse inequality has been used. 
  Now for (\ref{eq:Gammabnd}) first notice that from the scaling argument one obtains $\|\nabla^{\alpha} 
  \phi_K^{(\nu)}\|^p_{L^p(e)}\lesssim h_e^{d-1-p|\alpha|}$. Defining the set $K_e = \{K\in T_h : e \in K\}$
  and proceeding as before 
  \begin{align*}
      &\sum_{e \in E^b_h \cap \Gamma} \int_e |\nabla^\alpha u_h - \nabla^\alpha w_h|^p dx=\sum_{{e \in E^b_h \cap \Gamma}}\int_e\big|\sum_{\nu \in \mathcal{N}_{K_e}}(\alpha_{K_e}^{(\nu)}-\beta^{(\nu)})
   \nabla^\alpha\phi_{K_e}^{(\nu)}(x)\big|^p ds
   \\
  & \lesssim \sum_{{e \in E^b_h \cap \Gamma}} h_e^{d-1-p|\alpha|} \sum_{\nu \in \mathcal{N}_{K_e}}|\alpha_K^{(\nu)}-\beta^{(\nu)}|^p
  \lesssim  \sum_{{\nu \in \mathcal{N} \cap \Gamma}} h_\nu^{d-1-p|\alpha|} \sum_{K \in \omega_\nu}|\alpha_K^{(\nu)}-\beta^{(\nu)}|^p,
  \end{align*}
  where $h_\nu=\max_{e  \in E^b_h \cap \Gamma}h_e$. Following the same steps as before 
  \begin{align*}
      \sum_{e \in E^b_h \cap \Gamma} \int_e |\nabla^\alpha u_h - \nabla^\alpha w_h|^p dx
      \lesssim \sum_{e\in E^i_h}h_e^{d-1-p|\alpha|}|\jumpop{u_h}|^p_{L^\infty(e)} 
      \lesssim \sum_{e\in E^i_h}h_e^{-p|\alpha|}|\jumpop{u_h}|^p_{L^p(e)}.
  \end{align*}
  
\end{proof}
From Lemma~\ref{lemma:karap} it is straightforward to prove Poincar\'{e} type inequalities for DG spaces. Similar results have been presented in \cite{buffa2009compact}. 

\begin{Theorem}[Poincar\'{e} inequality for DG spaces.]
\label{thm:Poincare}
Let $u_h \in V_h(\Omega)$ and $\Gamma \subset \partial \Omega$ with $|\Gamma| >0$, then for all $h \le 1$ holds
\begin{align}
    \norm{u_h}_{L^p(\Omega)} \lesssim |u_h|_{W^{1,p}(\Omega)} + \norm{u_h}_{L^p(\Gamma)}.
    \label{eq:Poincare}
\end{align}
\end{Theorem}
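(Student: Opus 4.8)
The plan is to reduce the inequality to the classical Poincar\'e--Friedrichs inequality in $W^{1,p}(\Omega)$ by means of the continuous reconstruction supplied by Lemma~\ref{lemma:karap}. Given $u_h\in V_h(\Omega)$, let $w_h\in V_h(\Omega)\cap W^{1,p}(\Omega)$ be the continuous piecewise polynomial produced by that lemma. The first observation is that, since $h_e\le h\le 1$ and $p\ge 1$, we have $h_e\le h_e^{1-p}$ and $1\le h_e^{1-p}$; hence every power of $h_e$ appearing on the right-hand sides of the two estimates in Lemma~\ref{lemma:karap}, for $|\alpha|=0$ and for $|\alpha|=1$, is dominated by $h_e^{1-p}$, and consequently all those right-hand sides are bounded by $\sum_{e\in E_h^i}h_e^{1-p}\int_e|\jumpop{u_h}|^p\le |u_h|^p_{W^{1,p}(\Omega,T_h)}$, where $|\cdot|_{W^{1,p}(\Omega,T_h)}$ is the broken seminorm of \eqref{eq:seminorn}. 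This yields
\begin{align*}
\sum_{K\in T_h}\norm{u_h-w_h}_{L^p(K)}^p+\sum_{K\in T_h}\norm{\nabla_h u_h-\nabla w_h}_{L^p(K)}^p+\norm{u_h-w_h}_{L^p(\Gamma)}^p\lesssim |u_h|^p_{W^{1,p}(\Omega,T_h)}.
\end{align*}

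\textbf{Transferring the norms to $u_h$.} Using the triangle inequality in the $\ell^p$ sums over elements and edges together with the previous display (and recalling that, since $w_h$ is continuous and piecewise polynomial, $\nabla w_h$ is its genuine weak gradient, so no jump contributions appear), we obtain
\begin{align*}
|w_h|_{W^{1,p}(\Omega)}=\Big(\sum_{K\in T_h}\norm{\nabla w_h}_{L^p(K)}^p\Big)^{1/p}\lesssim |u_h|_{W^{1,p}(\Omega,T_h)},\qquad \norm{w_h}_{L^p(\Gamma)}\lesssim \norm{u_h}_{L^p(\Gamma)}+|u_h|_{W^{1,p}(\Omega,T_h)}.
\end{align*}
Since $w_h\in W^{1,p}(\Omega)$ and $\Omega$ is a bounded (connected) Lipschitz domain with $\Gamma\subset\partial\Omega$ of positive $\mathcal{H}^{d-1}$-measure, we may apply the classical Poincar\'e--Friedrichs inequality $\norm{v}_{L^p(\Omega)}\lesssim |v|_{W^{1,p}(\Omega)}+\norm{v}_{L^p(\Gamma)}$ for all $v\in W^{1,p}(\Omega)$, with constant depending only on $\Omega$, $\Gamma$, and $p$ (proved by the standard compactness--contradiction argument). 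Applied to $v=w_h$ this gives $\norm{w_h}_{L^p(\Omega)}\lesssim |u_h|_{W^{1,p}(\Omega,T_h)}+\norm{u_h}_{L^p(\Gamma)}$.

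\textbf{Conclusion.} A final triangle inequality $\norm{u_h}_{L^p(\Omega)}\le \norm{w_h}_{L^p(\Omega)}+\big(\sum_{K\in T_h}\norm{u_h-w_h}_{L^p(K)}^p\big)^{1/p}$, combined with the first and last displays, yields \eqref{eq:Poincare}. I do not expect a genuine obstacle: the only points that require care are (i) checking that each power of $h_e$ on the right-hand sides of Lemma~\ref{lemma:karap} is controlled by $h_e^{1-p}$ when $h\le 1$ (this is precisely where the hypothesis $h\le 1$ enters), and (ii) invoking the continuous Poincar\'e--Friedrichs inequality with control by the trace on a proper portion $\Gamma$ of $\partial\Omega$ rather than by the full boundary trace or by the mean value.
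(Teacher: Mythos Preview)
Your proposal is correct and follows exactly the route the paper indicates: the sentence preceding Theorem~\ref{thm:Poincare} explicitly states that the Poincar\'e inequality is a direct consequence of Lemma~\ref{lemma:karap}, and the paper then defers to \cite{GMKV2023} for details. Your argument---reconstruct $w_h\in W^{1,p}(\Omega)$ via Lemma~\ref{lemma:karap}, apply the classical Poincar\'e--Friedrichs inequality to $w_h$, and transfer back by the triangle inequality---is precisely that strategy carried out in full, with the role of the hypothesis $h\le 1$ correctly identified.
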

\begin{proof}
The proof can be found in e.g. \cite{GMKV2023}.
\end{proof}

\section{Convergence of Discrete  Minimisers}\label{Se:Convergence}
In this section we introduce our discontinuous Galerkin method and we show that, in the absence of convexity, our discrete energies $\Gamma$-converge to the relaxed problem. Similarly with \cite{GMKV2023}, a penalty term ensures the discrete stability, but in this case it is slightly modified to include also energies which are not convex.  

For $\Omega\subset \R^d$ a (Lipschitz) bounded domain, we consider the minimisation problem associated with the energy
\begin{equation}\label{eq:energy}
\mathcal{E}(u) = \int_\Omega W(\nabla u(x))\,dx,\quad\text{where}\,\,\, u:\Omega\to \R^N,\,\,u = u_0,\mbox{ on }\partial\Omega.
\end{equation}
We assume throughout that $W:\R^{N\times d}\to\R$ is of class $C^1$  and satisfies the following growth, coercivity, and continuity assumptions
\begin{align}
    -1 + |\xi|^p &\lesssim W(\xi) \lesssim 1 + |\xi|^p \label{eq:growth}\\
   | W(\xi_1) - W(\xi_2) | &\lesssim \left( 1 + |\xi_1|^{p-1} +|\xi_1|^{p-1} \right) | \xi_1 - \xi_2 |, \label{eq:growth2} 
\end{align}
for some $p>1$. Hence, we naturally pose the problem of minimising \eqref{eq:energy} on the Sobolev space 
\[
\mathbb{A}(\Omega) = \left\{u \in W^{1,p}(\Omega)^N\,:\, u = u_0\,\,\mathcal{H}^{d-1}\mbox{ a.e. on $\partial\Omega$}\right\}.
\label{eq:cont_fun_space}
\]
Due to the lack of any convexity assumption on $W$ and hence the possible absence of minimisers of \eqref{eq:energy}, we introduce the relaxed energy
\begin{align}\label{eq:energy_rel}
    \mathcal{E^{\rm{qc}}}(u) = \int_\Omega W^{qc}(\nabla u(x))\,dx, 
\end{align}
where $W^{qc}:\mathbb{R}^{N\times d}\to\mathbb{R}$ is the quasiconvex envelope of $W$, and it is defined as
\begin{align*}
    W^{qc}(\xi):=\sup\{g(\xi):\,g\leq W\,\,\,\text{and}\,\,\,g\,\,\text{quasiconvex}\}.
\end{align*}
It is known that $W^{qc}$ satisfies \eqref{eq:growth} and it is quasiconvex and hence the minimisation problem 
\begin{align*}
    \min_{u\in\mathbb{A}(\Omega)} \mathcal{E^{\rm{qc}}}(u)
\end{align*}
admits minimisers.

%\grek{.... moved text in section 2.1.1. In the previous version the following text comes after the definition of the lifting operator.}

Employing the presented ideas in \cite{GMKV2023} for the discretisation of energy functionals in the DG setting  and applying Dirichlet boundary conditions through Nitsche's 
approach, for $u_h\in V^q_h$  our discrete energy functional has the form
\begin{equation}\label{eq:energy_discrete}
\begin{aligned}
\mathcal{E}_h(u_h) = \sum_{K \in T_h}\int_{K}  W(\nabla u_h(x)) -\sum_{e \in E^i_h}\int_{e}\ S  (\Av{ \nabla u_h}  )  
\cdot \J {u_h \otimes n_e}  + \alpha \, {\rm Pen}(u_h),
\end{aligned}
\end{equation}
where $\alpha$ will be chosen large enough and the penalty ${\rm Pen}(u_h)$ is defined as 
\begin{equation}\label{eq:penalty}
\begin{aligned}
{\rm Pen}(u_h)  := & \left(1 + \sum_{K\in T_h} \int_KW(\nabla u_h) + \sum_{e\in E_h} \frac{1}{h_e^{p-1}}\int_e |\jumpop{u_h}|^p \right)^{\frac{p-1}{p}} \times \\
&\qquad\left(\sum_{e\in E_h} \frac{1}{h_e^{p-1}}\int_e |\jumpop{u_h}|^p\right)^{\frac1p}.
\end{aligned}
\end{equation}
where for the boundary faces we use the notation
\begin{align}
\jumpop{u_h} =  u_h^- -u_0, \text{ for } e \in E_h^b = E_h \cap \partial \Omega.
\end{align}
Recall that $u_0$ are the boundary conditions encoded in the continuous function space $\mathbb{A}(\Omega)$ of eq.~(\ref{eq:cont_fun_space}).

\begin{Remark}
\label{rem:penalty}
Alternatively, one may use the penalty term
\begin{equation}\label{eq:penalty1}
\begin{aligned}
{\rm Pen}(u_h) & := \left(1 + |u_h|^{p}_{W^{1,p}(\Omega,T_h)} \right)^{\frac{p-1}{p}} \left(\sum_{e\in E_h} \frac{1}{h_e^{p-1}}\int_e |\jumpop{u_h}|^p \right)^{\frac1p},
\end{aligned}
\end{equation}
We remark that for the purposes of the proofs we will use the penalty term \eqref{eq:penalty1} to avoid lengthy expressions. Note that the proofs remain unaffected as, due to the growth and coercivity conditions \eqref{eq:growth},
\[
\begin{aligned}
1 + |u_h|^{p}_{W^{1,p}(\Omega,T_h)} & \lesssim 1 + \sum_{K\in T_h} \int_KW(\nabla u_h) + \sum_{e\in E_h} \frac{1}{h_e^{p-1}}\int_e |\jumpop{u_h}|^p \\
& \lesssim 1 + |u_h|^{p}_{W^{1,p}(\Omega,T_h)}.
\end{aligned}
\]
\end{Remark}

\subsection{$\Gamma$-convergence of the discrete energies}
We may now state our main result:

\begin{Theorem}
\label{thm:main}
Let $W:\R^{N\times d}\to\R$ be of class $C^1$, satisfying assumptions \eqref{eq:growth} and \eqref{eq:growth2}. Suppose that $u_h \in V^q_h(\Omega)$ is a sequence of minimisers of $\mathcal{E}_h$, i.e.
\[
\mathcal{E}_h(u_h) = \min_{V^q_h(\Omega)} \mathcal{E}_h.
\]
Then, there exists $u\in \mathbb{A}(\Omega)$ such that, up to a subsequence,
\[
u_h \to u\mbox{ in }L^p(\Omega)
\]
and
\[
\mathcal{E}^{\rm qc}(u) = \min_{\mathbb{A}(\Omega)} \mathcal{E}^{\rm qc}.
\]
\end{Theorem}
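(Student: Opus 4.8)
The statement is a $\Gamma$-convergence-type result combined with equi-coercivity, so the plan is to proceed in the standard three steps: (i) establish a uniform bound on $\mathcal{E}_h(u_h)$ for the minimising sequence, hence equi-coercivity and compactness; (ii) prove a \emph{liminf inequality} showing that any $L^p$-limit $u$ of (a subsequence of) the $u_h$ satisfies $\mathcal{E}^{\rm qc}(u)\le\liminf_h\mathcal{E}_h(u_h)$; and (iii) prove a \emph{recovery sequence / limsup inequality}: for any $v\in\mathbb{A}(\Omega)$ there exist $v_h\in V^q_h(\Omega)$ with $v_h\to v$ in $L^p$ and $\limsup_h\mathcal{E}_h(v_h)\le\mathcal{E}^{\rm qc}(v)$. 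Combining (ii) and (iii) with minimality of $u_h$ gives, for the limit $u$, that $\mathcal{E}^{\rm qc}(u)\le\liminf_h\mathcal{E}_h(u_h)\le\limsup_h\mathcal{E}_h(v_h)\le\mathcal{E}^{\rm qc}(v)$ for every admissible $v$, i.e.\ $u$ is a minimiser of $\mathcal{E}^{\rm qc}$.

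\textbf{Compactness.} For step (i), test $\mathcal{E}_h$ against the interpolant of $u_0$ (or of a fixed smooth admissible function), which has bounded energy and controlled jumps — indeed the penalty vanishes up to interpolation error on the boundary — to get $\mathcal{E}_h(u_h)\le C$. The consistency term $\int_e S(\Av{\nabla u_h})\cdot\J{u_h\otimes n_e}$ is, by Young's inequality together with the growth bound \eqref{eq:growth} on $W$ and the $p$-scaling of the jump terms, absorbed into a small multiple of $\sum_K\int_K|\nabla u_h|^p$ plus the penalty; choosing $\alpha$ large enough, the penalty term \eqref{eq:penalty1} dominates, and from $\mathcal{E}_h(u_h)\le C$ one extracts $\sup_h|u_h|_{W^{1,p}(\Omega,T_h)}<\infty$ and $\sum_{e}h_e^{1-p}\int_e|\J{u_h}|^p\to 0$. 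The Poincaré inequality (Theorem~\ref{thm:Poincare}) then bounds $\|u_h\|_{L^p}$. Using Lemma~\ref{lemma:karap}, replace $u_h$ by a continuous reconstruction $w_h\in V_h(\Omega)\cap W^{1,p}(\Omega)$ with $\|u_h-w_h\|_{L^p}^p+\sum_K\|\nabla_h u_h-\nabla w_h\|_{L^p(K)}^p\lesssim\sum_e h_e^{1-p}\int_e|\J{u_h}|^p\to 0$; the $w_h$ are bounded in $W^{1,p}$, so up to a subsequence $w_h\rightharpoonup u$ in $W^{1,p}$ and $w_h\to u$ strongly in $L^p$ by Rellich, whence $u_h\to u$ in $L^p$. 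The boundary-trace estimate \eqref{eq:Gammabnd} together with the vanishing jump terms forces the trace of $w_h$ (hence of $u$) to equal $u_0$, so $u\in\mathbb{A}(\Omega)$.

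\textbf{The liminf inequality — the crux.} This is the hard step, precisely because $\nabla_h u_h$ is not a gradient (${\rm curl}\,\nabla_h u_h\neq 0$), so quasiconvexity of $W$ (or $W^{qc}$) does not directly give weak lower semicontinuity. The idea, as signalled in the introduction, is to transfer to the reconstruction: write $\int_K W(\nabla u_h)=\int_K W(\nabla w_h)+\big(\int_K W(\nabla_h u_h)-\int_K W(\nabla w_h)\big)$ and control the error using the Lipschitz-type bound \eqref{eq:growth2}, Hölder, and the Lemma~\ref{lemma:karap} estimate: $|\int_\Omega W(\nabla_h u_h)-\int_\Omega W(\nabla w_h)|\lesssim(1+\|\nabla_h u_h\|_{L^p}^{p-1}+\|\nabla w_h\|_{L^p}^{p-1})\|\nabla_h u_h-\nabla w_h\|_{L^p}\to 0$ since the factor is bounded and the last factor vanishes. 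Since $W\ge W^{qc}$ and $\nabla w_h\rightharpoonup\nabla u$ in $L^p$ with $w_h\in W^{1,p}$, Morrey's lower semicontinuity theorem for quasiconvex integrands (valid under the $p$-growth \eqref{eq:growth}, which $W^{qc}$ inherits) gives $\liminf_h\int_\Omega W(\nabla w_h)\ge\liminf_h\int_\Omega W^{qc}(\nabla w_h)\ge\int_\Omega W^{qc}(\nabla u)=\mathcal{E}^{\rm qc}(u)$. The remaining consistency term in $\mathcal{E}_h$ is estimated as in step (i) by $o(1)+$(small)$\times|u_h|^p_{W^{1,p}(\Omega,T_h)}$, but one must be careful: it is not a priori $o(1)$ unless absorbed, so the clean route is to note $\mathcal{E}_h(u_h)\ge\int_\Omega W(\nabla_h u_h)-|\text{consistency}|+\alpha\,{\rm Pen}(u_h)$ and choose $\alpha$ so that $\alpha\,{\rm Pen}(u_h)-|\text{consistency}|\ge -\varepsilon(1+|u_h|^p_{W^{1,p}})\cdot$(jump factor)$^{1/p}=o(1)$; then $\liminf_h\mathcal{E}_h(u_h)\ge\mathcal{E}^{\rm qc}(u)$. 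The delicate point throughout is keeping the constant in front of the jump terms small enough, and $\alpha$ large enough, that the consistency term is genuinely harmless while not destroying the upper bound needed for the recovery sequence.

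\textbf{Recovery sequence.} For step (iii), given $v\in\mathbb{A}(\Omega)$: by density and the definition of the quasiconvex envelope via relaxation (Dacorogna), one can find, for each $\delta>0$, a Lipschitz $v^\delta$ agreeing with $u_0$ on $\partial\Omega$ with $\|v^\delta-v\|_{L^p}<\delta$ and $\int_\Omega W^{qc}(\nabla v^\delta)\le\mathcal{E}^{\rm qc}(v)+\delta$; then, since $W^{qc}$ is the relaxation, choose $z^\delta$ with $z^\delta=v^\delta$ on $\partial\Omega$, $z^\delta\rightharpoonup v^\delta$, $z^\delta\to v^\delta$ in $L^p$, and $\int_\Omega W(\nabla z^\delta)\to\int_\Omega W^{qc}(\nabla v^\delta)$. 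Take $v_h=$ the (continuous Lagrange) interpolant of $z^\delta$ on $T_h$: then the jump terms vanish (continuous function, so $\J{v_h}=0$ on interior edges, and the boundary jump $v_h^- - u_0$ is an interpolation error that is $o(1)$ in the relevant norm), hence ${\rm Pen}(v_h)\to 0$ and the consistency term $\to 0$; and $\sum_K\int_K W(\nabla v_h)\to\int_\Omega W(\nabla z^\delta)$ by continuity of $W$ and the interpolation estimate. A diagonal argument in $\delta$ and $h$ then produces the required recovery sequence. This step is routine modulo the relaxation theorem for $W^{qc}$ and standard interpolation estimates; the real work of the theorem is the liminf inequality and the interplay of the penalty parameter $\alpha$ with the non-sign-definite consistency term.
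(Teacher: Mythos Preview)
Your overall architecture (equi-coercivity, liminf, limsup, then the standard $\Gamma$-convergence conclusion) matches the paper's, and your recovery-sequence step is essentially the paper's argument (relaxation via Dacorogna, then interpolation, then a diagonal). The compactness argument is also right in spirit.

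There is, however, a genuine gap in your liminf step, and it originates already in your compactness paragraph. From $\sup_h\mathcal{E}_h(u_h)<\infty$ you claim that
\[
\sum_{e\in E_h} h_e^{1-p}\int_e|\jumpop{u_h}|^p\to 0,
\]
and you use this to deduce $\|\nabla_h u_h-\nabla w_h\|_{L^p}\to 0$, hence that $\int_\Omega W(\nabla_h u_h)-\int_\Omega W(\nabla w_h)\to 0$. But bounded energy only gives \emph{boundedness} of the scaled jump term, not that it vanishes; a sequence $u_h$ with $\jumpop{u_h}=O(h)$ on every face yields a jump contribution of order one. Consequently $\|\nabla_h u_h-\nabla w_h\|_{L^p}$ is merely bounded, and the reconstruction error $II_h:=\int_\Omega W(\nabla_h u_h)-W(\nabla w_h)$ need not tend to zero. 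Your treatment of the consistency term has the same issue: you write $\alpha\,{\rm Pen}(u_h)-|\text{consistency}|\ge -\varepsilon(1+|u_h|^p_{W^{1,p}})^{(p-1)/p}(\text{jump})^{1/p}=o(1)$, but the right-hand side is (up to constants) $-\varepsilon\,{\rm Pen}(u_h)$, which is only bounded, not $o(1)$.

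The fix, which is exactly what the paper does, is not to show these terms vanish but to absorb them into the penalty. One estimates via H\"older and Lemma~\ref{lemma:karap} that both $|II_h|$ and the consistency term $|III_h|$ are bounded by a constant times ${\rm Pen}(u_h)$, so
\[
II_h+III_h+\alpha\,{\rm Pen}(u_h)\ \ge\ (\alpha-C-\tilde C)\,{\rm Pen}(u_h)\ \ge\ 0
\]
for $\alpha$ large enough, and therefore $\liminf_h\mathcal{E}_h(u_h)\ge\liminf_h\int_\Omega W(\nabla w_h)\ge\mathcal{E}^{\rm qc}(u)$. This is precisely why the penalty has the form $(1+|u_h|^p_{W^{1,p}(\Omega,T_h)})^{(p-1)/p}(\text{jump})^{1/p}$: it is tailored to match the H\"older structure of the error bounds so that absorption is possible without ever needing the jump term to vanish.
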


The proof of Theorem \ref{thm:main} is a direct consequence of the fact that $\mathcal{E}_h$ $\Gamma$-converges to $\mathcal{E}^{\rm qc}$ with respect to the strong topology of $L^p(\Omega)$. The remainder of this section is devoted to the proof of the $\Gamma$-convergence result. We start by proving compactness in an appropriate topology for sequences of bounded energy. 
%We note that for any $u\in W^{1,p}(K)$ it holds that
%\[
%\|u\|_{L^p(\partial K)} \lesssim \|u\|^{1-\frac1p}_{L^p(K)}\|u\|^{\frac1p}_{W^{1,p}(K)}.
%\]
%whereas, by standard estimates, and for $u\in W^{l,p}(K) \cap W^{m,q}(K)$ 
%\[
%\|u\|_{W^{l,p}(K)} \lesssim h^{m-l + \frac{d}{p} - \frac{d}{q}}\|v\|_{W^{m,q}}.
%\]
%Combining the two estimates we find the trace inequality
%\begin{equation}
%\label{eq:trace_ineq}
%\|\|_{L^p(\partial K)} \lesssim h^{-\frac1p} \|v\|_{L^p(K)}.
%\end{equation}
%
%We may now show the following compactness result:

\begin{Lemma}
\label{lemma:compactness}
For $u_h \in V^q_h(\Omega)$ it holds that
\[
\|u_h\|^p_{W^{1,p}(\Omega,T_h)} \lesssim 1 + \mathcal{E}_h(u_h).
\]
In particular, if 
\[
\sup_h\mathcal{E}_h(u_h) \leq C
\]
there exists $u\in \mathbb{A}(\Omega)$ such that, up to a subsequence,
\[
u_h \to u\mbox{ in }L^p(\Omega).
\]
\end{Lemma}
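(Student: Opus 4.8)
The plan is to establish the coercivity estimate $\|u_h\|^p_{W^{1,p}(\Omega,T_h)} \lesssim 1 + \mathcal{E}_h(u_h)$ first, and then to deduce the compactness statement by passing to a continuous reconstruction via Lemma~\ref{lemma:karap} and applying standard weak compactness in $W^{1,p}$.

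\emph{Step 1: the energy controls the broken norm.} Starting from the definition \eqref{eq:energy_discrete}, the growth/coercivity bound \eqref{eq:growth} gives $\sum_K \int_K W(\nabla u_h) \gtrsim -|\Omega| + \sum_K\int_K|\nabla u_h|^p$, so the volume term controls $|u_h|^p_{W^{1,p}(\Omega,T_h)}$ up to the jump contribution. The penalty term $\alpha\,{\rm Pen}(u_h)$ is nonnegative, and by Remark~\ref{rem:penalty} it is comparable to $(1+|u_h|^p_{W^{1,p}(\Omega,T_h)})^{(p-1)/p}\bigl(\sum_e h_e^{1-p}\int_e|\jumpop{u_h}|^p\bigr)^{1/p}$; this pins down the jump seminorm once $\alpha$ is large. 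The genuinely delicate term is the consistency (Nitsche) term $-\sum_{e}\int_e S(\Av{\nabla u_h})\cdot\J{u_h\otimes n_e}$: I would bound $|S(\Av{\nabla u_h})| \lesssim 1 + |\Av{\nabla u_h}|^{p-1}$ (from the growth of $W$, since $S$ is built from $DW$ or similar), then use trace and inverse inequalities to write $\int_e|\Av{\nabla u_h}|^{p-1}|\jumpop{u_h}| \lesssim$ a quantity of the form (volume gradient term)$^{(p-1)/p}$ times (jump term)$^{1/p}$, with a weight $h_e$ that matches the scaling in ${\rm Pen}$ and \eqref{eq:seminorn}. Young's inequality $ab \le \delta a^{p'} + C_\delta b^{p}$ then lets me absorb the $(p-1)/p$-power factor into the volume term and the $1/p$-power jump factor into $\alpha\,{\rm Pen}(u_h)$, provided $\alpha$ is chosen large enough and $\delta$ small. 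Collecting terms yields $|u_h|^p_{W^{1,p}(\Omega,T_h)} \lesssim 1 + \mathcal{E}_h(u_h)$; combined with the Poincaré inequality of Theorem~\ref{thm:Poincare} (using the boundary term in \eqref{eq:seminorn}, which is controlled since $u_h$ matches $u_0$ up to the jump on $E_h^b$ that is itself penalised) one upgrades the seminorm to the full norm.

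\emph{Step 2: compactness.} Given $\sup_h \mathcal{E}_h(u_h)\le C$, Step~1 gives a uniform bound on $\|u_h\|_{W^{1,p}(\Omega,T_h)}$, hence in particular $\sum_{e\in E_h^i} h_e^{1-p}\int_e|\jumpop{u_h}|^p \le C$. Apply Lemma~\ref{lemma:karap} with $|\alpha|=0$ and $|\alpha|=1$ to obtain $w_h \in V_h(\Omega)\cap W^{1,p}(\Omega)$ with $\|u_h-w_h\|^p_{L^p}\lesssim \sum_e h_e\int_e|\jumpop{u_h}|^p \le h_{\max}^p\cdot C \to 0$ and $\|\nabla u_h - \nabla w_h\|^p_{L^p}\lesssim \sum_e h_e^{1-p}\int_e|\jumpop{u_h}|^p \le C$. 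Therefore $\|w_h\|_{W^{1,p}(\Omega)}$ is uniformly bounded, so up to a subsequence $w_h \rightharpoonup u$ in $W^{1,p}(\Omega)^N$ and $w_h \to u$ in $L^p$ by Rellich. Since $\|u_h - w_h\|_{L^p}\to 0$, also $u_h \to u$ in $L^p(\Omega)$. Finally, $u\in\mathbb{A}(\Omega)$: the boundary trace estimate \eqref{eq:Gammabnd} together with the boundary jumps $\jumpop{u_h}=u_h^--u_0$ being controlled by the penalty shows the trace of $w_h$ on $\partial\Omega$ converges to $u_0$, so $u=u_0$ on $\partial\Omega$ in the sense of traces.

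\emph{Main obstacle.} The crux is the careful treatment of the consistency term in Step~1: getting the $h_e$-powers to line up so that, after Young's inequality, the bad factors are genuinely absorbed by the volume energy and the penalty rather than merely bounded by them. This is exactly where the structure of ${\rm Pen}(u_h)$ — with its $(p-1)/p$ and $1/p$ exponents mirroring the Hölder split of the consistency term — is essential, and it is the reason the penalty has been "slightly modified" relative to \cite{GMKV2023} to accommodate nonconvex $W$ (the coercivity of $W$ alone, not convexity, is what is used). One must also verify that the constant absorbed does not depend on $h$, which relies on the scaling in the inverse and trace inequalities on shape-regular meshes.
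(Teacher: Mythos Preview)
Your Step~1 coincides with the paper's argument: H\"older on the consistency term with weights $h_e^{(p-1)/p}$ and $h_e^{-(p-1)/p}$, the inverse/trace inequality to pass $\int_e h_e|\Av{\nabla_h u_h}|^p$ back to $\int_K|\nabla_h u_h|^p$, and Young's inequality to split. The paper then makes explicit the point you allude to, namely that
\[
{\rm Pen}(u_h)\ \ge\ \sum_{e\in E_h^i}\frac{1}{h_e^{p-1}}\int_e|\jumpop{u_h}|^p,
\]
which is what lets $\alpha\,{\rm Pen}(u_h)$ swallow the bad jump term once $\alpha$ is large; the Poincar\'e inequality and the boundary jumps in the penalty then give the full norm, exactly as you outline.

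Your Step~2, however, is a genuinely different route. The paper does not build a conforming reconstruction here: it directly invokes the compactness of the broken Sobolev space \cite[Theorem~5.2]{buffa2009compact} to obtain $u_h\to u$ in $L^p$, and then the compact trace embedding \cite[Lemma~8]{buffa2009compact} (together with $\|u_h-u_0\|_{L^p(\partial\Omega)}\to 0$ from the boundary part of the penalty) to verify $u\in\mathbb{A}(\Omega)$. You instead pass through the continuous reconstruction $w_h$ of Lemma~\ref{lemma:karap}, note that $\|u_h-w_h\|_{L^p}\to 0$ while $\|w_h\|_{W^{1,p}}$ stays bounded, apply Rellich in the conforming space, and recover the boundary condition via \eqref{eq:Gammabnd}. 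Your argument is correct and more self-contained---it is essentially the mechanism behind the Buffa--Ortner compactness result, and it dovetails with the paper's proof of the $\liminf$-inequality (Lemma~\ref{lemma:liminf}), where the same $w_h$ is constructed anyway. The paper's route is shorter given the cited literature; yours avoids the black box.
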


\begin{proof}
For $u_h \in V^q_h(\Omega)$, using assumption \eqref{eq:growth2}, we estimate the second term in the discrete energy \eqref{eq:energy_discrete} as follows:
\begin{align*}
    &\int_{\Gamma_{\text{int}}}\big | DW(\{\{\nabla_h u_h\}\}): \J {v_h \otimes n_e} \big |ds=\int_{\Gamma_{\text{int}}}\big |h^{\frac{p-1}{p}}DW(\{\{\nabla_h u_h\}\}): h^{-\frac{p-1}{p}}\J {v_h \otimes n_e}\big | \\
&\lesssim \left(\int_{\Gamma_{\text{int}}}h(1+|\{\{\nabla_h u_h\}\}|^{p-1})^{\frac{p}{p-1}}ds\right)^{\frac{p-1}{p}}\left(\int_{\Gamma_{\text{int}}}h^{1-p}|\J {v_h \otimes n_e}|^p ds\right)^{1/p}\\
    & \lesssim \left(\int_{\Gamma_{\text{int}}}h(1+|\{\{\nabla _h u_h\}\}|^{p})ds\right)^{\frac{p-1}{p}}\left(\int_{\Gamma_{\text{int}}}h^{1-p}|\J {v_h \otimes n_e}|^p ds\right)^{1/p},
\end{align*}
where for the first term on the RHS we further use an inverse inequality to infer that 
\begin{align*}
    \int_{\Gamma_{\text{int}}}h(1+|\{\{\nabla_h u_h\}\}|^{p})ds&\lesssim \int_{\Gamma_{\text{int}}}h(1+|\nabla_h u_h^+|^{p}+|\nabla_h u_h^-|^{p})ds\\
    &\lesssim \sum_{K\in\mathcal{T}_h}\int_{\partial K}h(1+|\nabla_h u_h|^{p})ds\\
    &\lesssim \sum_{K\in\mathcal{T}_h}\int_{ K}1+|\nabla_h u_h|^{p}dx.
\end{align*}

\noindent Hence, by Young's inequality, we infer that 
\begin{align}\label{eq:compact_1}
\int_{\Gamma_{\text{int}}}\big |DW(\{\{\nabla_h u_h\}\}): \J {v_h \otimes n_e}\big |  & \lesssim  \delta^{1-p} \sum_{e \in E_h^i} h_e^{1-p} \int_e |\jumpop{u_h}|^p \nonumber \\
&\quad + \delta  \int_\Omega \left(1 + |\nabla_h u_h|^{p}\right).
\end{align}
%Indeed, the above constants appear as for general $\alpha$, $\beta >0$ and $q = p/(p-1)$, it follows that
%\begin{align*}
%\ln(\alpha\beta) & = \ln(\alpha \delta^{-\frac1q}\delta^\frac1q \beta)
% = \ln(\delta^{-\frac1q}\alpha) + \ln(\delta^\frac1q \beta) \\
%& = \frac1p \ln(\delta^{-\frac{p}{q}}\alpha^p) + \frac1q \ln(\delta \beta^q)
% \leq \ln\left(\frac1p \delta^{-\frac{p}{q}}\alpha^p + \frac1q \delta \beta^q\right),
%\end{align*}
%where the inequality follows by the concavity of the logarithm.
The coercivity assumption on $W$ says that
\[
W(\xi) \gtrsim -1 + |\xi|^p.
\]
Thus, choosing $\delta$ appropriately as a function of $p$, by \eqref{eq:compact_1} we find that
\[
\mathcal{E}_h(u_h) \gtrsim  \int_\Omega |\nabla_h u_h|^p - 1 - C(p)\sum_{e \in E_h^i} \frac{1}{h_e^{p-1}} \int_e |\jumpop{u_h}|^p + \alpha\,{\rm Pen}(u_h).
\] 
Next, note that since 
\begin{equation}\label{eq:norm_ineq}
 |u_h|^{p}_{W^{1,p}(\Omega,T_h)} \geq  \sum_{e\in E^i_h} \frac{1}{h_e^{p-1}}\int_e |\jumpop{u_h}|^p
\end{equation}
the penalty term in \eqref{eq:penalty} satisfies
\begin{align*}
{\rm Pen}(u_h) & = \left(1 + |u_h|^{p}_{W^{1,p}(\Omega,T_h)} \right)^{\frac{p-1}{p}} \left(\sum_{e\in E_h} \frac{1}{h_e^{p-1}}\int_e |\jumpop{u_h}|^p \right)^{\frac1p}\\
&\geq \left(1 + \sum_{e\in E^i_h} \frac{1}{h_e^{p-1}}\int_e |\jumpop{u_h}|^p \right)^{\frac{p-1}{p}} \left(\sum_{e\in E^i_h} \frac{1}{h_e^{p-1}}\int_e |\jumpop{u_h}|^p \right)^{\frac1p} \\
& \geq \sum_{e\in E^i_h} \frac{1}{h_e^{p-1}}\int_e |\jumpop{u_h}|^p,
\end{align*}
where the last inequality follows from the fact that the conjugate exponents $(p-1)/p$ and $1/p$ add up to $1$. Then, provided that $\alpha > C(p)$, we deduce that
\[
\mathcal{E}_h(u_h) \gtrsim  \int_\Omega |\nabla_h u_h|^p - 1 + (\alpha - C(p))\sum_{e\in E_h^i} \frac{1}{h_e^{p-1}} \int_e |\jumpop{u_h}|^p
\] 
and hence that
\[
|u_h|^p_{W^{1,p}(\Omega,T_h)} \lesssim 1 + \mathcal{E}_h(u_h).
\]
Furthermore, if $\mathcal{E}_h(u_h) \leq C$, $||u_h||_{L^p(\partial \Omega)}$ is uniformly bounded which implies from the Poincar\'{e} inequality~(\ref{eq:Poincare}) that $||u_h||_{L^p(\Omega)} \leq C^\prime$. From the compactness of the space $W^{1,p}(\Omega,T_h)$, see \cite[Theorem 5.2]{buffa2009compact},
% Furthermore, Sobolev-Poincar\'e's inequalities for broken Sobolev spaces \cite[Theorem 4.1]{buffa2009compact} \grek{(Note that they do not include the case
% $p \ge d$ in their proof but the proof looks fine for $p \ge d$, where $p^* = \infty$)}
%  imply that
% \begin{align}
% \norm{u_h}_{L^p(\Omega)}  \lesssim | u_h |_{W^{1,p}(\Omega, T_h)} +  ||u_h||_{L^p(\Gamma)}, 
% \end{align}
% where $\Gamma \subset \partial \Omega$, with $|\Gamma| > 0$, and for the latter inequality see \cite[Lemma 6.1]{di2010discrete}.
% Thus, if $\mathcal{E}_h(u_h) \leq C$, $||u_h||_{L^p(\Gamma)}$ is uniformly bounded which implies that $||u_h||_{L^p(\Omega)} \leq C^\prime$.
we infer that there exists a function $u\in W^{1,p}(\Omega)$ such that 
\begin{align*}
    u_h\to u\,\,\,\text{in}\,\,L^p(\Omega).
\end{align*}
To prove $u \in \mathbb{A}(\Omega)$ note that
\[
||u - u_0||_{L^p(\partial \Omega)} \le ||u_h - u_0||_{L^p(\partial \Omega)} + ||u - u_h||_{L^p(\partial \Omega)}.
\]
As $h\rightarrow 0$  the first term of the right hand side converge to zero due to the assumption that $\sup_h\mathcal{E}_h(u_h) \leq C$ and
$||u - u_h||_{L^p(\partial \Omega)} \rightarrow 0$  from the compact embedding in \cite[Lemma 8]{buffa2009compact}.
\end{proof}

We next prove the $\liminf$-inequality, providing a lower bound for the sequence $\mathcal{E}_h(u_h)$. 

\begin{Lemma}
\label{lemma:liminf}
Suppose that $u_h \in V^q_h(\Omega)$ and $u\in \mathbb{A}(\Omega)$ satisfy $u_h \to u$ in $L^p(\Omega)$. Then,
\[
\liminf_{h\to 0} \mathcal{E}_h(u_h) \geq \mathcal{E}^{\rm qc}(u).
\]
\end{Lemma}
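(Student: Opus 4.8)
\emph{Proof strategy.} The plan is to realise the $\liminf$ along a subsequence, use the coercivity already established in Lemma~\ref{lemma:compactness} to gain compactness, and then — since $\nabla_h u_h$ is \emph{not} a gradient and weak lower semicontinuity of $\xi\mapsto\int W^{qc}$ is therefore unavailable — to pass to the continuous reconstruction of $u_h$ and show that every error term produced along the way is absorbed by the penalty $\alpha\,{\rm Pen}(u_h)$.

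\emph{Step 1 (reduction and compactness).} If $\liminf_{h\to0}\mathcal{E}_h(u_h)=+\infty$ there is nothing to prove; otherwise pass to a non-relabelled subsequence along which $\mathcal{E}_h(u_h)$ converges to the value of the $\liminf$, so that $\sup_h\mathcal{E}_h(u_h)<\infty$. By Lemma~\ref{lemma:compactness}, $\|u_h\|_{W^{1,p}(\Omega,T_h)}$ is then uniformly bounded; in particular $\{\nabla_h u_h\}$ is bounded in $L^p(\Omega)$ and, writing
\[
J_h:=\sum_{e\in E_h^i}\tfrac{1}{h_e^{p-1}}\int_e|\jumpop{u_h}|^p,
\]
the proof of Lemma~\ref{lemma:compactness} already gives $J_h\le{\rm Pen}(u_h)$, hence $\sup_hJ_h<\infty$.

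\emph{Step 2 (continuous reconstruction).} I would let $w_h\in V_h(\Omega)\cap W^{1,p}(\Omega)$ be the reconstruction of Lemma~\ref{lemma:karap}. Its case $|\alpha|=0$ gives $\sum_K\|u_h-w_h\|_{L^p(K)}^p\lesssim\sum_{e\in E_h^i}h_e\int_e|\jumpop{u_h}|^p\le h^pJ_h\to0$, hence $w_h\to u$ in $L^p(\Omega)$; its case $|\alpha|=1$ gives
\[
\sum_{K\in T_h}\|\nabla_h u_h-\nabla w_h\|_{L^p(K)}^p\lesssim J_h,
\]
so $\{\nabla w_h\}$, and therefore $\{w_h\}$, is bounded in $W^{1,p}(\Omega)$. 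Combined with $w_h\to u$ in $L^p$ and $u\in\mathbb{A}(\Omega)\subset W^{1,p}(\Omega)^N$, this yields $w_h\rightharpoonup u$ in $W^{1,p}(\Omega)$, i.e. $\nabla w_h\rightharpoonup\nabla u$ in $L^p(\Omega)$.

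\emph{Step 3 (the key estimate).} Since $W^{qc}\le W$ we have $\sum_K\int_K W(\nabla u_h)\ge\int_\Omega W^{qc}(\nabla_h u_h)$. As $W^{qc}$ is real-valued, quasiconvex and of $p$-growth, it is locally Lipschitz and hence also satisfies an estimate of type \eqref{eq:growth2}; with Hölder's inequality and the bound of Step~2 this gives
\[
\Big|\int_\Omega W^{qc}(\nabla_h u_h)-\int_\Omega W^{qc}(\nabla w_h)\Big|\lesssim\big(1+\|\nabla_h u_h\|_{L^p(\Omega)}^p+\|\nabla w_h\|_{L^p(\Omega)}^p\big)^{\frac{p-1}{p}}J_h^{1/p}\lesssim{\rm Pen}(u_h),
\]
using $\|\nabla w_h\|_{L^p}^p\lesssim\|\nabla_h u_h\|_{L^p}^p+J_h$, the inequality $|u_h|_{W^{1,p}(\Omega,T_h)}^p\ge\|\nabla_h u_h\|_{L^p}^p$, the bound $J_h\le{\rm Pen}(u_h)$ and the definition \eqref{eq:penalty1}, with a constant depending only on $p$ and the mesh shape-regularity. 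In the same way as in Lemma~\ref{lemma:compactness} (inequality \eqref{eq:compact_1} before applying Young's inequality, together with $|S(\xi)|\lesssim1+|\xi|^{p-1}$ from \eqref{eq:growth2}) the interface term satisfies
\[
\Big|\sum_{e\in E_h^i}\int_e S(\Av{\nabla_h u_h})\cdot\J{u_h\otimes n_e}\Big|\lesssim\big(1+\|\nabla_h u_h\|_{L^p(\Omega)}^p\big)^{\frac{p-1}{p}}J_h^{1/p}\lesssim{\rm Pen}(u_h).
\]
Hence there is a constant $C_\star>0$, depending only on $p$, on $W$ through \eqref{eq:growth}--\eqref{eq:growth2}, and on the shape-regularity of $T_h$, but not on $h$ or on the sequence, such that
\[
\mathcal{E}_h(u_h)\ge\int_\Omega W^{qc}(\nabla w_h)+(\alpha-C_\star){\rm Pen}(u_h)\ge\int_\Omega W^{qc}(\nabla w_h)
\]
once $\alpha\ge C_\star$ (the precise meaning of ``$\alpha$ large enough'').

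\emph{Step 4 (conclusion) and main obstacle.} Since $W^{qc}$ is continuous, quasiconvex and of $p$-growth, the functional $v\mapsto\int_\Omega W^{qc}(\nabla v)$ is sequentially weakly lower semicontinuous on $W^{1,p}(\Omega)^N$, so $\nabla w_h\rightharpoonup\nabla u$ gives
\[
\liminf_{h\to0}\mathcal{E}_h(u_h)\ge\liminf_{h\to0}\int_\Omega W^{qc}(\nabla w_h)\ge\int_\Omega W^{qc}(\nabla u)=\mathcal{E}^{\rm qc}(u).
\]
The hard part is not any single estimate but the bookkeeping in Step~3: because $\nabla_h u_h$ is not curl-free one \emph{must} pass to $w_h$, yet the reconstruction and interface errors do not vanish on their own ($J_h$ is merely bounded, not infinitesimal), so the whole argument hinges on these errors being of \emph{exactly} the size $\lesssim{\rm Pen}(u_h)$; this is precisely why the penalty \eqref{eq:penalty1} must be growth-matched and $\alpha$ chosen above the sequence-independent threshold $C_\star$. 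A lesser technical point is the use of the fact that a real-valued quasiconvex function of $p$-growth is locally Lipschitz, so that $W^{qc}$ inherits the modulus-of-continuity estimate \eqref{eq:growth2}.
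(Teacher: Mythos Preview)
Your proof is correct and follows essentially the same strategy as the paper's: reduce to bounded energy, pass to the continuous reconstruction $w_h$ of Lemma~\ref{lemma:karap}, obtain $\nabla w_h\rightharpoonup\nabla u$, bound both the reconstruction error and the interface term by ${\rm Pen}(u_h)$ so that they are absorbed for $\alpha$ large, and conclude by weak lower semicontinuity of the quasiconvex integrand. The only cosmetic difference is that you drop from $W$ to $W^{qc}$ \emph{before} comparing $\nabla_h u_h$ with $\nabla w_h$ (and therefore need the local Lipschitz property of $W^{qc}$, which you correctly derive from quasiconvexity plus $p$-growth), whereas the paper keeps $W$ throughout, uses assumption~\eqref{eq:growth2} directly to bound $\int W(\nabla_h u_h)-\int W(\nabla w_h)$, and only passes to $W^{qc}$ at the final lower-semicontinuity step~\eqref{eq:liminf_2}; both routes are valid and lead to the same threshold condition on $\alpha$.
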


\begin{proof}
Given $u_h$, $u$ as in the statement, suppose that $\liminf \mathcal{E}_h(u_h) \leq C$ as otherwise there is nothing to prove. By passing to a subsequence, we may thus assume that
\[
\sup_h \mathcal{E}_h(u_h) \leq C
\]
and by Lemma \ref{lemma:compactness} we have that $\|u_h\|_{W^{1,p}(\Omega,T_h)} \leq C$ and that
\begin{equation}\label{eq:liminf_1}
u_h\to u\mbox{ in }L^p(\Omega),
\end{equation}
where $u\in W^{1,p}(\Omega)$. Now, from Lemma \ref{lemma:karap}, we extract a sequence $w_h\in V_h(\Omega) \cap W^{1,p}(\Omega)$  such that
\begin{align}
\int_\Omega| u_h -  w_h|^p
&\lesssim \sum_{e\in E^i_h} h_e \int_e |\jumpop{u_h}|^p ds \label{eq:k1},\\
\int_\Omega|\nabla_h u_h - \nabla w_h|^p
&\lesssim \sum_{e\in E^i_h} h_e^{1-p} \int_e |\jumpop{u_h}|^p ds, \label{eq:k2}
\end{align}
and so combined with \eqref{eq:liminf_1} we get that $w_h\to u$ in $L^p(\Omega)$. In addition to this, since
\begin{align}\label{eq:k3}
    \int_\Omega|\nabla w_h|^p\lesssim \sum_{e\in E^i_h} \frac{1}{h_e^{p-1}}\int_e |\jumpop{u_h}|^p+\int_\Omega|\nabla_h u_h|^p\lesssim |u_h|^p_{W^{1,p}(\Omega,\mathcal{T}_h)},
\end{align}
we infer that $(\nabla w_h)$ is bounded in $L^p(\Omega)$ and hence there exists $G\in L^p(\Omega)$ such that $\nabla w_h\rightharpoonup G$ in $L^p(\Omega)$. However, the strong convergence of $w_h$ in $L^p(\Omega)$ and the distributional continuity of the operator $\nabla: L^p(\Omega)\to L^p(\Omega)$, give us that $G=\nabla u$ and so
\begin{align*}
    \nabla w_h\rightharpoonup\nabla u,\,\,\,\text{in}\,\, L^p(\Omega).
\end{align*}
Note that since $W^{qc}$ is quasiconvex and $W\geq W^{qc}$, it holds that
\begin{equation}\label{eq:liminf_2}
\liminf_{h\to 0}\int_\Omega W(\nabla w_h) \geq\liminf_{h\to 0}\int_\Omega W^{qc}(\nabla w_h)\geq \int_\Omega W^{qc}(\nabla u) =\mathcal{E}^{\rm qc}(u).
\end{equation}
We thus add and subtract the term
\[
\int_\Omega W(\nabla w_h)
\]
in the discrete energy $\mathcal{E}_h$ to find that
\[
\begin{aligned}
\mathcal{E}_h(u_h) & = \int_\Omega W(\nabla w_h) + \int_\Omega W(\nabla_h u_h) - W(\nabla w_h) \\
&\qquad - \int_{\Gamma_{\text{int}}}DW(\{\{\nabla_h u_h\}\}): \J {v_h \otimes n_e} +  \alpha\, {\rm Pen}(u_h) \\
&\qquad =: I_h + II_h + III_h +  \alpha\, {\rm Pen}(u_h).
\end{aligned}
\]
By \eqref{eq:liminf_2} it suffices to show that
\begin{equation}\label{eq:sufficient}
\liminf_{h\to 0} \left[ II_h + III_h +  \alpha\, {\rm Pen}(u_h) \right] \geq 0. 
\end{equation}
Regarding the term $III_h$ we argue similarly to Lemma \ref{lemma:compactness} to get that
\begin{equation*}
| III_h |  \lesssim \left(\sum_{K\in\mathcal{T}_h}\int_{ K}1+|\nabla_h u_h|^{p}dx\right)^{1/p'}\left(\int_{\Gamma_{\text{int}}}h^{1-p}|\J {v_h \otimes n_e}|^p ds\right)^{1/p},
\end{equation*}
which in turns implies that
\begin{equation}\label{eq:liminf6}
\begin{aligned}
| III_h | &\leq C \left(1 + |u_h |_{W^{1,p}(\Omega,T_h)}^{p}\right)^{\frac{p-1}{p}}\left(\sum_{e\in E^i_h} \frac{1}{h_e^{p-1}}\int_e |\jumpop{u_h}|^p\right)^{\frac1p} \\
&\leq C\, {\rm Pen}(u_h).
\end{aligned}
\end{equation}

Next, we estimate the term $II_h$. By H\"older's inequality, \eqref{eq:k2}, \eqref{eq:k3} and \eqref{eq:growth2} we find that
\begin{equation}\label{eq:liminf7}
\begin{aligned}
| II_h | & \lesssim \int_\Omega \left( 1 + |\nabla_h u_h|^{p-1} +|\nabla w_h|^{p-1} \right) | \nabla_h u_h - \nabla w_h | \\
& \lesssim  \left[\int_\Omega \left( 1 + |\nabla_h u_h|^{p-1} +|\nabla w_h|^{p-1} \right)^{\frac{p}{p-1}} \right]^{\frac{p-1}{p}} \left(\int_\Omega|\nabla_h u_h-\nabla w_h|^p\right)^\frac{1}{p} \\
& \lesssim \left[\int_\Omega \left( 1 + |\nabla_h u_h|^{p} +|\nabla w_h|^{p} \right) \right]^{\frac{p-1}{p}}  \left(\sum_{e\in E^i_h} \frac{1}{h_e^{p-1}}\int_e |\jumpop{u_h}|^p\right)^{\frac1p}\\
&\lesssim\left( 1 + |u_h|_{W^{1,p}(\Omega,T_h)}^p  \right)^{\frac{p-1}{p}}  \left(\sum_{e\in E^i_h} \frac{1}{h_e^{p-1}}\int_e |\jumpop{u_h}|^p\right)^{\frac1p}\\
&\leq \tilde C \,{\rm Pen}(u_h).
\end{aligned}
\end{equation}
Combining the latter estimate with \eqref{eq:liminf6}, we get that 
\[
II_h + III_h +  \alpha\, {\rm Pen}(u_h) \geq \left(\alpha - C -\tilde C\right) {\rm Pen}(u_h) 
\]
and choosing $\alpha > C+\tilde C $ we infer \eqref{eq:sufficient} which concludes the proof.
\end{proof}

The remaining ingredient in the proof of Theorem \ref{thm:main} is providing a recovery sequence which is given in the following lemma.

\begin{Lemma}
\label{lemma:limsup}
For every $u \in \mathbb{A}(\Omega)$ if $q \ge 1$ and $h<1$ there exist $u_h \in V^q_h(\Omega)$  such that $u_h \to u$ in $L^p(\Omega)$ and 
\[
\limsup_{h\to 0} \mathcal{E}_h(u_h) \leq \mathcal{E}^{\rm qc}(u).
\]
\end{Lemma}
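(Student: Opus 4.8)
The plan is to handle the relaxation first at the continuous level and only then discretise, using a \emph{conforming} interpolant so that all the discontinuous--Galerkin correction terms drop out; in this direction the absence of a gradient structure for $\nabla_h$---which is the obstruction in Lemma~\ref{lemma:liminf}---plays no role. Concretely, since $W^{qc}$ inherits the growth and continuity bounds \eqref{eq:growth}--\eqref{eq:growth2} from $W$, I would first invoke the relaxation theorem for quasiconvex integrands with prescribed Dirichlet datum: there is a sequence $v_j\in\mathbb{A}(\Omega)$ with $v_j-u\in W^{1,p}_0(\Omega)^N$ and $v_j\rightharpoonup u$ in $W^{1,p}(\Omega)^N$---hence $\sup_j\|v_j\|_{W^{1,p}(\Omega)}<\infty$ and, by Rellich--Kondrachov, $v_j\to u$ in $L^p(\Omega)$---such that
\[
\lim_{j\to\infty}\int_\Omega W(\nabla v_j)\,dx=\int_\Omega W^{qc}(\nabla u)\,dx=\mathcal{E}^{\rm qc}(u).
\]

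For each fixed $j$ I would then take $\Pi_h v_j\in V^q_h(\Omega)\cap W^{1,p}(\Omega)$ to be a Scott--Zhang quasi-interpolant of $v_j$ (here $q\ge1$ is used), so that $\Pi_h v_j\to v_j$ in $W^{1,p}(\Omega)$, hence also in $L^p(\Omega)$, as $h\to0$. Because $\Pi_h v_j$ is continuous its interior jumps vanish, so the interface flux term in $\mathcal{E}_h$ is identically zero, $|\Pi_h v_j|^p_{W^{1,p}(\Omega,T_h)}=\|\nabla\Pi_h v_j\|^p_{L^p(\Omega)}$ stays bounded, and only boundary faces contribute to ${\rm Pen}$. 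On a boundary face one has $\jumpop{\Pi_h v_j}=\Pi_h v_j-u_0=\Pi_h v_j-v_j$ since $v_j=u_0$ on $\partial\Omega$, and the Scott--Zhang trace estimate localises the error to a boundary strip $\omega_h$ of width $O(h)$,
\[
\sum_{e\in E_h^b}\frac{1}{h_e^{p-1}}\int_e|\jumpop{\Pi_h v_j}|^p\;\lesssim\;\int_{\omega_h}|\nabla v_j|^p\,dx,
\]
whose right-hand side tends to $0$ as $h\to0$ by absolute continuity of the integral; hence ${\rm Pen}(\Pi_h v_j)\to0$. For the bulk term, the strong convergence $\nabla\Pi_h v_j\to\nabla v_j$ in $L^p(\Omega)$ gives, along any subsequence, a.e.\ convergence dominated by a fixed $L^p$ function, so by the upper bound in \eqref{eq:growth} the integrands $W(\nabla\Pi_h v_j)$ are dominated in $L^1$ and dominated convergence yields $\int_\Omega W(\nabla\Pi_h v_j)\to\int_\Omega W(\nabla v_j)$; as the limit does not depend on the subsequence, the whole sequence converges. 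Collecting these facts, $\lim_{h\to0}\mathcal{E}_h(\Pi_h v_j)=\mathcal{E}(v_j)$ for every $j$.

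It remains to diagonalise. We have $\|\Pi_h v_j-v_j\|_{L^p(\Omega)}\to0$ and $\mathcal{E}_h(\Pi_h v_j)\to\mathcal{E}(v_j)$ as $h\to0$ for each fixed $j$, while $\|v_j-u\|_{L^p(\Omega)}\to0$ and $\mathcal{E}(v_j)\to\mathcal{E}^{\rm qc}(u)$ as $j\to\infty$; by Attouch's diagonalisation lemma there is $j(h)\to\infty$ as $h\to0$ such that $u_h:=\Pi_h v_{j(h)}\in V^q_h(\Omega)$ satisfies $u_h\to u$ in $L^p(\Omega)$ and $\mathcal{E}_h(u_h)\to\mathcal{E}^{\rm qc}(u)$, in particular $\limsup_{h\to0}\mathcal{E}_h(u_h)\le\mathcal{E}^{\rm qc}(u)$, which is the claim (and, together with Lemma~\ref{lemma:liminf}, completes the $\Gamma$-convergence).

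I expect the only genuinely delicate point to be the $h_e^{1-p}$-weighted \emph{boundary} contribution to the penalty: a crude conforming interpolant bounds it merely by $\|\nabla v_j\|^p_{L^p(\Omega)}$, which does not vanish, so it is essential to use the sharper Scott--Zhang trace estimate that confines the defect to a boundary layer of shrinking measure. The only other ingredient that must be quoted with care is the relaxation theorem in the form that yields a recovery sequence retaining the prescribed Dirichlet data; everything else (vanishing of interior jumps, continuity of $\xi\mapsto W(\xi)$ under strong $L^p$ convergence of gradients, the diagonal extraction) is routine.
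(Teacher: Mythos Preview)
Your argument is correct and shares the paper's global structure: first relax at the continuous level to obtain $v_j\in\mathbb{A}(\Omega)$ with $\mathcal{E}(v_j)\to\mathcal{E}^{\rm qc}(u)$, then build a discrete recovery sequence for each $\mathcal{E}(v_j)$, then diagonalise. The execution differs. You interpolate each $v_j\in W^{1,p}$ directly via a conforming Scott--Zhang operator, so that interior jumps vanish identically and only the boundary contribution to ${\rm Pen}$ survives, which you dispatch by the localised trace estimate $h_e^{1-p}\|v_j-\Pi_h v_j\|_{L^p(e)}^p\lesssim|v_j|_{W^{1,p}(\omega_e)}^p$ together with absolute continuity of $\int|\nabla v_j|^p$. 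The paper instead first mollifies $v_j$ to a smooth $u_\delta$, nodally interpolates, and tunes the mollification scale $\delta=h^\beta$ with $\beta\in(\tfrac{p-1}{p},1)$ so that both the interpolation error and the $h^{1-p}$-weighted boundary penalty vanish; it then organises the energy through the Karakashian--Pascal reconstruction $w_h$ of Lemma~\ref{lemma:karap} (which, since the nodal interpolant of a smooth function is already conforming, in fact equals $u_{h,\delta}$, so the terms $II_h$ and $III_h$ there are trivially zero---the paper is simply reusing the template of Lemma~\ref{lemma:liminf}). Your route is shorter and sidesteps both the mollification and the reconstruction lemma; the paper's route keeps to nodal interpolation and makes the scale balancing completely explicit. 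One minor point: your dominated-convergence justification for $\int_\Omega W(\nabla\Pi_h v_j)\to\int_\Omega W(\nabla v_j)$ is correct but slightly indirect; the bound \eqref{eq:growth2} with H\"older's inequality yields it in one line.
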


\begin{proof}
First, similarly to \cite{GMKV2023}[Lemma 3.6],  we show that for all $v \in \mathbb{A}(\Omega)$ there exists a recovery sequence $(u_h) \subset V_h^q$ such that 
\begin{align}
|| v - u_h ||^p_{W^{1,p}(\Omega,T_h)} +  \sum_{e\in E^b_h}\frac{1}{h_e^{p-1}}\int_e |u_h - u_{0}|^p \rightarrow 0, 
\text{ as } h \rightarrow 0.
\label{limsup_uh_conv}
\end{align}
and 
\begin{align}
\lim_{h\rightarrow 0} \mathcal{E}_h(u_h) = \mathcal{E}(v).
\label{eq:limsup_Eh_conv}
\end{align}
\\
First note that $C^\infty(\bar{\Omega})$ is dense in $W^{1,p}(\Omega)$ and one can create a sequence 
$(u_\delta) \subset C^\infty(\bar{\Omega})$ with the properties
\begin{align}
||v - u_\delta||_{W^{1,p}(\Omega)} \lesssim \delta \quad \text{and} \quad |u_\delta|_{W^{2,p}(\Omega)} \lesssim \frac{1}{\delta} |v|_{W^{1,p}(\Omega)}.
\label{eq:mol_ineq}
\end{align}
Choosing the sequence $u_{h, \delta} = I_h^q u_\delta$, $I_h^q: W^{s,p}(\Omega) \rightarrow V_h^q(\Omega)$ is the standard nodal interpolation operation,
we recall the error estimates for all $q \ge 1$ 
\begin{align}
&|u_\delta - u_{h,\delta}|_{W^{i,p}(K)} \lesssim h_K^{2-i} |u_\delta |_{W^{2,p}(K)}, 
\label{eq:error_in_K} \\ 
&|u_\delta - u_{h,\delta}|_{L^p(e)} \lesssim h_K^{2-\frac{1}{p} }|u_\delta|_{W^{2,p}(K)}.
\label{eq:error_in_e} 
\end{align}
In the following, assuming that $h <1$, the above error estimates are combined with eq.~(\ref{eq:mol_ineq}) 
\begin{align}
&\sum_{K \in T_h} |u_\delta - u_{h, \delta}|_{W^{i,p}(K)}^p \lesssim h^p |u_\delta |_{W^{2,p}(\Omega)}^p \lesssim \frac{h^p}{\delta^p} |v|_{W^{1,p}(\Omega)}^p, 
\label{eq:bulk_terms}
\\
&\sum_{e \in E_h} \frac{1}{h_e^{p-1}} || \jumpop{u_\delta - u_{h, \delta}}||_{L^p(e)}^p \lesssim h^p \sum_{e \in E_h}|u_\delta|^p_{W^{2,p}(K_e)} 
\lesssim \frac{h^p}{\delta^p} |v|_{W^{1,p}(\Omega)}^p,
\label{eq:jump_terms}
\end{align}
here $K_e$ denotes the set of the elements containing   $e$, $e \in E_h$. For the boundary terms applying Dirichlet boundary conditions through Nitsche's approach, 
the above estimates together with eq.~(\ref{eq:mol_ineq}) imply
\begin{equation}
\begin{aligned}
&\sum_{e\in E^b_h}\frac{1}{h_e^{p-1}}\int_e |u_{h,\delta} - u_0|^p  \lesssim \sum_{e\in E^b_h}\frac{1}{h_e^{p-1}}\int_e \left( |u_{h, \delta} - u_{\delta}|^p + |u_\delta - v|^p \right) 
\\
&\lesssim \left( \frac{h^{p}}{\delta^{p}} + \frac{\delta^p}{h^{p-1}}\right) || v||^p_{W^{1,p}(\Omega)}.
\label{eq:bnd_limsup}
\end{aligned}
\end{equation}
For $\beta \in (\frac{p-1}{p}, 1)$, choosing $\delta = h^\beta$ equations~(\ref{eq:bulk_terms}), (\ref{eq:jump_terms}) and (\ref{eq:bnd_limsup})
$\rightarrow 0$ as $h\rightarrow 0$. Here $v\in W^{1,p}(\Omega)$ ($u_\delta$ is continuous by construction), which means $\jumpop{v} =0$ on every $e \in E_h^i$ and as $h\rightarrow 0$
\begin{align}
    \norm{v -u_{h,\delta}}^p_{W^{1,p}(\Omega, T_h)} \le \norm{v -u_\delta}_{W^{1,p}(\Omega)}
    + \norm{u_\delta - u_{h,\delta}}_{W^{1,p}(\Omega, T_h)}^p \rightarrow 0.
\end{align}
Therefore, we found a sequence in $V_h^q(\Omega)$ such that eq.~\eqref{limsup_uh_conv} holds.

%\ref{eq:error_in_e},  \ref{eq:error_in_K} 

%
%Lemma 3 in Ortner--Buffa says that given $u\in W^{1,p}(\Omega)$, there exist $u_h \in V^q_h(\Omega)$ such that
%\[
%| u - u_h |_{W^{1,p}(\Omega,T_h)} \to 0,\,\, h\to 0.
%\]
%\kos{So we need only fix the boundary values; NEEDS TO BE CHECKED}

Next, for the convergence of the discrete functional,
eq.~(\ref{eq:limsup_Eh_conv}) and considering again the sequence $(w_h)$ of Lemma \ref{lemma:karap}, we note that
\begin{equation}\label{eq:limsup_1}
\nabla w_h \to \nabla v \mbox{ in }L^p(\Omega). 
\end{equation}
Indeed, recalling that  $\jumpop{v} = 0$, we find that
\begin{align*}
\int_\Omega |\nabla w_h - \nabla v|^p & \lesssim \int_\Omega |\nabla_h u_h - \nabla v|^p + \int_\Omega |\nabla_h u_h-\nabla w_h|^p\\
& \lesssim  \int_\Omega |\nabla_h u_h - \nabla v|^p + \sum_{e\in E_h^i} \frac{1}{h_e^{p-1}} \int_e |\jumpop{u_h} |^p\\
& =  \sum_K \int_K |\nabla_h u_h - \nabla v|^p + \sum_{e\in E_h^i} \frac{1}{h_e^{p-1}} \int_e |\jumpop{u_h - v} |^p\\
& = |u_h - v|^p_{W^{1,p}(\Omega,T_h)}\to 0.
\end{align*}
% \kos{Here I assumed that $\jumpop{u} = 0$. Do we need $p$ larger than dimension?}

As in Lemma \ref{lemma:liminf}, we can write the discrete energy as
\[
\begin{aligned}
\mathcal{E}_h(u_h) & = \int_\Omega W(\nabla w_h) + \int_\Omega W(\nabla_h u_h) - W(\nabla w_h) \\
&\qquad -\int_{\Gamma_{\text{int}}}DW(\{\{\nabla_h u_h\}\}): \J {u_h \otimes n_e} +  \alpha\, {\rm Pen}(u_h)  \\
&\qquad =: I_h + II_h + III_h +  \alpha\, {\rm Pen}(u_h).
\end{aligned}
\]
Vitali's convergence theorem, \eqref{eq:limsup_1}, and the growth of $W$ say that
\begin{equation}\label{eq:limsup_2}
I_h = \int_\Omega W(\nabla w_h) \to \int_\Omega W(\nabla v) = \mathcal{E}(v) \mbox{ as } h\to 0,
\end{equation}
and since the boundary terms converge to zero, eq.~(\ref{limsup_uh_conv}),  it remains to show that $II_h$, $III_h$ 
as well as ${\rm Pen}(u_h)$ all vanish in the limit $h\to 0$. Using the estimates \eqref{eq:liminf6} and \eqref{eq:liminf7}, we find that 
\[
 |II_h| + |III_h| \lesssim  {\rm Pen}(u_h) 
\]
and it thus suffices to prove that 
\[
\limsup_{h\to0}{\rm Pen}(u_h)  = 0.
\]
Indeed, since $ |u_h - v |_{W^{1,p}(\Omega,T_h)} \to 0$, it follows that
\[
|u_h|_{W^{1,p}(\Omega,T_h)} \leq C
\]
and hence
\begin{align*}
{\rm Pen}(u_h) & \lesssim \left( \sum_{e \in E_h} \frac{1}{h_e^{p-1}} \int_e |\jumpop{u_h} |^p\right)^{\frac1p}  
 \lesssim \left( \sum_{e\in E_h} \frac{1}{h_e^{p-1}} \int_e |\jumpop{u_h - v} |^p\right)^{\frac1p} \\
 & \lesssim \left( |u_h -v|_{W^{1,p}(\Omega,T_h) } + 
 \sum_{e\in E_h^b} \frac{1}{h_e^{p-1}} \int_e |\jumpop{u_h - v} |^p \right)^{\frac1p} \to 0 .
\end{align*}
This completes the proof of \eqref{eq:limsup_Eh_conv}.

Let now $u\in\mathbb{A}(\Omega)$, then from \cite{dacorogna2007direct}[Theorem 9.1] the exists a sequence $(u_k)\subset \mathbb{A}(\Omega)$ such that 
\begin{align*}
    u_k\to u,\,\,\, \text{in}\,\, L^p(\Omega)\quad\text{and}\quad
    \mathcal{E}_k(u_k)\to \mathcal{E}^{\rm qc}(u),
\end{align*}
for $k\to\infty$. Using the first part of our proof, for each $k\in\mathbb{N}$, we can further extract a sequence $u_{k,h}\subset V_h^q$ such that 
\begin{align*}
    u_{k,h}\to u_k,\,\,\, \text{in}\,\, L^p(\Omega),\quad\text{and}\quad
    \mathcal{E}_h(u_{k,h})\to \mathcal{E}(u_k),
\end{align*}
for $h\to 0$. Hence, combining the above convergences we have that
\begin{align*}
    \lim_{k\to\infty}\lim_{h\to 0}\mathcal{E}_h(u_{k,h})=\lim_{k\to\infty}\mathcal{E}(u_k)=\mathcal{E}^{\rm qc}(u),
\end{align*}
and 
\begin{align*}
\lim_{k\to\infty}\lim_{h\to 0}\|u_{k,h}-u\|_{L^p(\Omega)}=0.
\end{align*}
The latter implies that we can choose a subsequence $(k_h)$ such that for $u_{k_h,h}\subset V_h^q$ it holds that
\begin{align*}
     &\lim_{h\to 0}\mathcal{E}_{h}(u_{k_h,h})=\mathcal{E}^{\rm qc}(u),\,\,\,\text{and},\\
     &\lim_{h\to 0}\|u_{k_h,h}-u\|_{L^p(\Omega)}=0,
\end{align*}
which concludes the proof of our Lemma.
\end{proof}

We conclude by demonstrating the proof of Theorem \ref{thm:main}.

\begin{proof}[Proof of Theorem \ref{thm:main}]

Suppose that $u_h \in V^q_h$ is given such that
\[
\mathcal{E}_h(u_h) = \inf_{V^q_h} \mathcal{E}_h.
\]
Note that for any $v \in \mathbb{A}(\Omega)$, by Lemma \ref{lemma:limsup} we can find a sequence $v_h \in V^q_h$ such that
\[
v_h \to v \mbox{ in }L^p(\Omega)
\]
and $\limsup \mathcal{E}_h(v_h) \leq \mathcal{E}^{\rm qc}(v)$. In particular, up to a subsequence,
\[
\mathcal{E}_h(u_h) \leq \mathcal{E}_h(v_h) \leq 1 + \limsup \mathcal{E}_h(v_h) \leq 1 + \mathcal{E}^{\rm qc}(v) < \infty.
\]
Hence, by Lemma \ref{lemma:compactness}, there exists $u\in \mathbb{A}(\Omega)$ such that
\[
u_h \to u \mbox{ in }L^p(\Omega)
\]
and, for any $v \in \mathbb{A}(\Omega)$,
\[
\mathcal{E^{\rm qc}}(u) \leq \liminf_{h\to0} \mathcal{E}_h(u_h) \leq \limsup \mathcal{E}_h(v_h) \leq \mathcal{E}^{\rm qc}(v),
\]
where $v_h$ is the recovery sequence for $v$ whose existence is guaranteed by Lemma \ref{lemma:limsup}. That is, 
\[
\mathcal{E}^{\rm qc}(u) = \inf_{\mathbb{A}(\Omega)} \mathcal{E}^{\rm qc}
\]
completing the proof.
\end{proof}

\subsection{DG methods based on discrete gradients}\label{Se:discrG}
We start with a definition of discrete gradients in the discontinuous Galerkin setting. To this end, 
define first the lifting operator $R_h : T(E_h) \to V^{q-1}_h(\Omega)$  as
\begin{align}
 \int_\Omega R_h(\varphi) : w_h \,dx =\sum_{e\in E^i_h} \int_e \dgal{ w_h } : 
\jumpop{ \varphi \otimes n_e } \, ds, \quad  \forall w_h \in V^{q-1}_h(\Omega).
\label{eq:lift_operator}
\end{align}
This operator leads to the definition of  the discrete gradient $G_h$ for $u_h\in V^q_h$ as
\begin{align}
G_h(u_h) = \nabla_h u_h - R_h(u_h ).
\label{discrete_gradient}
\end{align}
The method of Ten Eyck and Lew (2006) \cite{ten2006discontinuous} is based on minimising over the DG space the functional   defined on $ V^{q}_h:$
\begin{equation}\label{en_dg_lo}
\begin{aligned}
\mathcal{E}_{\text{G}   , h} [u_h] &= \sum_{K \in T_h}\int_{K}  W(\nabla u_h(x)  - R_h(u_h ))  
% &-\sum_{e \in E^{int}}\int_{e}\  \Av{ \PP\, S  (\nabla u_h  )}  \cdot \J {u_h}  ds \, . 
+ \alpha\text{Pen}\, ( {u_h} )\\
&= \sum_{K \in T_h}\int_{K}  W(G_h(u_h))  
% &-\sum_{e \in E^{int}}\int_{e}\  \Av{ \PP\, S  (\nabla u_h  )}  \cdot \J {u_h}  ds \, . 
+ \alpha\text{Pen}\, ( {u_h} ).
 \end{aligned}
\end{equation}
Convergence in the case of convex $W$ was derived in \cite{buffa2009compact} and error estimates for smooth solutions were derived in   
\cite{ortner2007discontinuous}.   The evaluation of this functional requires the
 computation of $R_h(u_h ),$ and thus of   the ``discrete gradient"  $G_h(u_h)$ which appears   in $W.$
Computationally, the cost of this operation is reasonable, since it involves the solution of local systems.   
%  This consists a natural %choice, given that $G_h(u_h) = \nabla u_h(x)  - R_h(u_h )$ is considered as the discrete gradient.  
%This method was introduced by Ten Eyck and Lew (2006) \cite{ten2006discontinuous} and further analysed in  
%\cite{ortner2007discontinuous},\cite{buffa2009compact}. 
In the convex case,  the weak convergence properties of the discrete gradient were enough to establish lim-inf inequalities and conclude the proof of convergence of minimisers. In our setting this is no longer the case. As in the previous section 
the key idea is still to use  Lemma \ref{lemma:karap} and to  extract a sequence $w_h\in V_h(\Omega) \cap W^{1,p}(\Omega)$  such that
\eqref{eq:k1}, \eqref{eq:k2} will hold.
%\begin{align}
%\int_\Omega| u_h -  w_h|^p
%&\lesssim \sum_{e\in E^i_h} h_e \int_e |\jumpop{u_h}|^p ds \label{eq:k1},\\
%\int_\Omega|\nabla_h u_h - \nabla w_h|^p
%&\lesssim \sum_{e\in E^i_h} h_e^{1-p} \int_e |\jumpop{u_h}|^p ds, \label{eq:k2}
%\end{align}
Then,  \eqref{eq:liminf_1} will imply  that $w_h\to u$ in $L^p(\Omega)$. 
%In addition to this, since
%\begin{align}\label{eq:k3}
%    \int_\Omega|\nabla w_h|^p\lesssim \sum_{e\in E^i_h} \frac{1}{h_e^{p-1}}\int_e |\jumpop{u_h}|^p+\int_\Omega|\nabla_h u_h|^p\lesssim |u_h|^p_{W^{1,p}(\Omega,\mathcal{T}_h)},
%\end{align}
%we infer that $(\nabla w_h)$ is bounded in $L^p(\Omega)$ and hence there exists $G\in L^p(\Omega)$ such that $\nabla w_h\rightharpoonup G$ in $L^p(\Omega)$. However, the strong convergence of $w_h$ in $L^p(\Omega)$ and the distributional continuity of the operator $\nabla: L^p(\Omega)\to L^p(\Omega)$, give us that $G=\nabla u$ and so
Repeating the same arguments as in Lemma \ref{lemma:liminf}, we conclude
%\begin{align*}
%    \nabla w_h\rightharpoonup\nabla u,\,\,\,\text{in}\,\, L^p(\Omega).
%\end{align*}
%Note that since $W^{qc}$ is quasiconvex and $W\geq W^{qc}$, it holds that
\begin{equation}\label{eq:liminf_2_DGr}
\liminf_{h\to 0}\int_\Omega W(\nabla w_h) \geq\liminf_{h\to 0}\int_\Omega W^{qc}(\nabla w_h)\geq \int_\Omega W^{qc}(\nabla u) =\mathcal{E}^{\rm qc}(u).
\end{equation}
And thus it suffices to show that 
\begin{equation}\label{eq:sufficient_dGr}
\liminf_{h\to 0} \left[ A _h +    \alpha\, {\rm Pen}(u_h) \right] \geq 0. 
\end{equation}
where 
%\[
%\int_\Omega W(\nabla w_h)
%\]
%in the discrete energy $\mathcal{E}_h$ to find that
\[
\begin{aligned}
\mathcal{E}_{\text{G}   , h}  (u_h) & = \int_\Omega W(\nabla w_h) + \int_\Omega W(G_h( u_h)) - W(\nabla w_h) +  \alpha\, {\rm Pen}(u_h)  \\
&\qquad =: \int_\Omega W(\nabla w_h)   + A_h  +  \alpha\, {\rm Pen}(u_h).
\end{aligned}
\]
To estimate $A_h$
we shall use the known bound for the lifting operator, \cite{buffa2009compact,di2010discrete},  
  \begin{align}
   \int_\Omega |R_h(v_h)|^p  \leq 
  C_R \sum_{e \in E_h^i}h_e^{1-p} \int_e |\jumpop{v_h}|^p, \qquad v_h \in V^q_h(\Omega), 
  \label{eq:R_bound}
  \end{align}
  where the constant $C_R$ is independent of $h$. A consequence of this bound, see \cite{GMKV2023}, is 
  that for all $v_h \in V^q_h(\Omega)$ it holds that 
\begin{equation}
 \label{eq:R_bound_2}
\|G_h(v_h) \|_{L^p(\Omega)} \leq C |v_h|_{W^{1,p}(\Omega,T_h)}.	
\end{equation}
 Next, proceeding as in Lemma  \ref{lemma:liminf}, we conclude
 % By H\"older's inequality, \eqref{eq:k2}, \eqref{eq:k3} and \eqref{eq:growth2} we find that
\begin{equation*}%\label{eq:liminf7_dGr}
\begin{aligned}
| A_h | & \lesssim \int_\Omega \left( 1 + |G_h( u_h) |^{p-1} +|\nabla w_h|^{p-1} \right) |G_h( u_h) - \nabla w_h | \\
& \lesssim  \left[\int_\Omega \left( 1 + |G_h( u_h) |^{p-1} +|\nabla w_h|^{p-1} \right)^{\frac{p}{p-1}} \right]^{\frac{p-1}{p}} \left(\int_\Omega|G_h( u_h) -\nabla w_h|^p\right)^\frac{1}{p} \\
& \lesssim \left[\int_\Omega \left( 1 + |G_h( u_h)|^{p} +|\nabla w_h|^{p} \right) \right]^{\frac{p-1}{p}} 
\left [ \left(\int_\Omega|\nabla_h u_h -\nabla w_h|^p\right)^\frac{1}{p} + \left(\int_\Omega|R_h(v_h)|^p\right)^\frac{1}{p} \right ]\\
%& \lesssim  \left[\int_\Omega \left( 1 + |\nabla_h u_h|^{p-1} +|\nabla w_h|^{p-1} \right)^{\frac{p}{p-1}} \right]^{\frac{p-1}{p}} \left(\int_\Omega|\nabla_h u_h-\nabla w_h|^p\right)^\frac{1}{p} \\
%& \lesssim \left[\int_\Omega \left( 1 + |\nabla_h u_h|^{p} +|\nabla w_h|^{p} \right) \right]^{\frac{p-1}{p}}  \left(\sum_{e\in E^i_h} \frac{1}{h_e^{p-1}}\int_e |\jumpop{u_h}|^p\right)^{\frac1p}\\
&\lesssim\left( 1 + |u_h|_{W^{1,p}(\Omega,T_h)}^p  \right)^{\frac{p-1}{p}}  \left(\sum_{e\in E^i_h} \frac{1}{h_e^{p-1}}\int_e |\jumpop{u_h}|^p\right)^{\frac1p}\\
&\leq \tilde C \,{\rm Pen}(u_h).
\end{aligned}
\end{equation*}
As a result,  
\[
A_h  +  \alpha\, {\rm Pen}(u_h) \geq \left(\alpha   -\tilde C\right) {\rm Pen}(u_h) \, .
\]
Thus, if $\alpha >  \tilde C ,$  \eqref{eq:sufficient_dGr} follows. 
The lim-sup inequality follows by adopting in similar fashion arguments from the previous section. 
We therefore obtain the following theorem.

\begin{Theorem}
\label{thm:main_dGr}
Let $W:\R^{N\times d}\to\R$ be of class $C^1$, satisfying assumptions \eqref{eq:growth} and \eqref{eq:growth2}. Suppose that $u_h \in V^q_h(\Omega)$ is a sequence of minimisers of $\mathcal{E}_{\text{G}   , h}$, i.e.
\[
\mathcal{E}_{\text{G}   , h}(u_h) = \min_{V^q_h(\Omega)} \mathcal{E}_{\text{G}   , h}.
\]
Then, there exists $u\in \mathbb{A}(\Omega)$ such that, up to a subsequence,
\[
u_h \to u\mbox{ in }L^p(\Omega)
\]
and
\[
\mathcal{E}^{\rm qc}(u) = \min_{\mathbb{A}(\Omega)} \mathcal{E}^{\rm qc}.
\]
\end{Theorem}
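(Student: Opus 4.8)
The plan is to show that $\mathcal{E}_{\mathrm{G},h}$ $\Gamma$-converges to $\mathcal{E}^{\rm qc}$ with respect to the strong $L^p(\Omega)$ topology, following the same three-step scheme (compactness, $\liminf$-inequality, recovery sequence) used for Theorem~\ref{thm:main}, and then to conclude exactly as in the proof of that theorem. For \emph{compactness} I would mimic Lemma~\ref{lemma:compactness}: from $\nabla_h u_h = G_h(u_h) + R_h(u_h)$ and the lifting bound \eqref{eq:R_bound} one has $\int_\Omega |\nabla_h u_h|^p \lesssim \int_\Omega |G_h(u_h)|^p + \sum_{e\in E^i_h} h_e^{1-p}\int_e |\jumpop{u_h}|^p$; coercivity \eqref{eq:growth} bounds $\int_\Omega |G_h(u_h)|^p$ by $|\Omega| + \int_\Omega W(G_h(u_h))$, while the jump contribution is absorbed by the penalty exactly as in Lemma~\ref{lemma:compactness}. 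This yields $\|u_h\|_{W^{1,p}(\Omega,T_h)}^p \lesssim 1 + \mathcal{E}_{\mathrm{G},h}(u_h)$, and then the Poincar\'e inequality of Theorem~\ref{thm:Poincare} together with the compact embedding of $W^{1,p}(\Omega,T_h)$ from \cite{buffa2009compact} gives, along a subsequence, $u_h \to u$ in $L^p(\Omega)$ with $u \in \mathbb{A}(\Omega)$ (the boundary condition passing to the limit as in Lemma~\ref{lemma:compactness}).

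The \emph{$\liminf$-inequality} is essentially the computation already displayed before the theorem: extract the continuous reconstruction $w_h \in V_h(\Omega)\cap W^{1,p}(\Omega)$ of Lemma~\ref{lemma:karap} satisfying \eqref{eq:k1}--\eqref{eq:k2}, note $w_h \to u$ and $\nabla w_h \rightharpoonup \nabla u$ in $L^p(\Omega)$ by distributional continuity of $\nabla$, and use quasiconvexity of $W^{qc}$ together with $W\ge W^{qc}$ to obtain \eqref{eq:liminf_2_DGr}. Writing $\mathcal{E}_{\mathrm{G},h}(u_h) = \int_\Omega W(\nabla w_h) + A_h + \alpha\,{\rm Pen}(u_h)$ with $A_h = \int_\Omega \big(W(G_h(u_h)) - W(\nabla w_h)\big)$, the continuity bound \eqref{eq:growth2}, H\"older's inequality, the triangle inequality $|G_h(u_h) - \nabla w_h| \le |\nabla_h u_h - \nabla w_h| + |R_h(u_h)|$, and the estimates \eqref{eq:k2}, \eqref{eq:R_bound}, \eqref{eq:R_bound_2} give $|A_h| \lesssim {\rm Pen}(u_h)$; choosing $\alpha$ large then yields \eqref{eq:sufficient_dGr}, hence $\liminf_h \mathcal{E}_{\mathrm{G},h}(u_h) \ge \mathcal{E}^{\rm qc}(u)$.

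For the \emph{recovery sequence} I would first treat $v \in \mathbb{A}(\Omega)$ targeting $\mathcal{E}(v) = \int_\Omega W(\nabla v)$: take the nodal interpolants $u_h = I_h^q u_\delta$ of a mollification $u_\delta$ of $v$, with $\delta = h^\beta$, $\beta \in \big(\tfrac{p-1}{p},1\big)$, exactly as in Lemma~\ref{lemma:limsup}, so that $|u_h - v|_{W^{1,p}(\Omega,T_h)} \to 0$ and all jump and Nitsche boundary terms vanish; then $\nabla_h u_h \to \nabla v$ in $L^p$, ${\rm Pen}(u_h) \to 0$, and by \eqref{eq:R_bound} also $\|R_h(u_h)\|_{L^p(\Omega)}^p \lesssim \sum_{e\in E^i_h} h_e^{1-p}\int_e |\jumpop{u_h}|^p \to 0$, whence $G_h(u_h) = \nabla_h u_h - R_h(u_h) \to \nabla v$ strongly in $L^p(\Omega)$; Vitali's theorem with the growth bound \eqref{eq:growth} then gives $\int_\Omega W(G_h(u_h)) \to \int_\Omega W(\nabla v)$ and hence $\mathcal{E}_{\mathrm{G},h}(u_h) \to \mathcal{E}(v)$. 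For general $u \in \mathbb{A}(\Omega)$ one invokes Dacorogna's relaxation theorem \cite{dacorogna2007direct} to pick $u_k \to u$ in $L^p$ with $\mathcal{E}(u_k) \to \mathcal{E}^{\rm qc}(u)$, and diagonalises to produce $u_{k_h,h} \in V_h^q(\Omega)$ with $u_{k_h,h} \to u$ in $L^p$ and $\mathcal{E}_{\mathrm{G},h}(u_{k_h,h}) \to \mathcal{E}^{\rm qc}(u)$. The theorem then follows as in the proof of Theorem~\ref{thm:main}: a sequence of minimisers has energy bounded above (compare with a recovery sequence of a fixed test function), so by compactness $u_h \to u$ in $L^p$ for some $u \in \mathbb{A}(\Omega)$, and $\mathcal{E}^{\rm qc}(u) \le \liminf_h \mathcal{E}_{\mathrm{G},h}(u_h) \le \limsup_h \mathcal{E}_{\mathrm{G},h}(v_h) \le \mathcal{E}^{\rm qc}(v)$ for every $v \in \mathbb{A}(\Omega)$, i.e. $u$ minimises $\mathcal{E}^{\rm qc}$.

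\textbf{Main obstacle.} The genuine difficulty, already the crux of the paper, is that $G_h(u_h)$ is \emph{not} a gradient (${\rm curl}\, G_h(u_h) \neq 0$), so quasiconvex lower semicontinuity cannot be applied directly to $\int_\Omega W(G_h(u_h))$; it must be transferred to the true gradient $\nabla w_h$ of the Karakashian reconstruction, and the success of this transfer hinges on controlling the discrepancy $G_h(u_h) - \nabla w_h$ — simultaneously the reconstruction error \eqref{eq:k2} and the lifting term \eqref{eq:R_bound} — by the modified penalty ${\rm Pen}(u_h)$, and on choosing $\alpha$ large enough to absorb the resulting term $A_h$. A secondary point to watch is the strong convergence $R_h(u_h)\to 0$ in $L^p$ in the recovery step, which again relies on \eqref{eq:R_bound} together with the vanishing jump seminorm of the interpolant.
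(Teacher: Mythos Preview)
Your proposal is correct and follows essentially the same route as the paper: the $\liminf$ argument is exactly the $A_h$ computation displayed in Section~\ref{Se:discrG} (controlling $G_h(u_h)-\nabla w_h$ via \eqref{eq:k2} and \eqref{eq:R_bound} and absorbing $A_h$ into $\alpha\,{\rm Pen}(u_h)$), and your treatment of compactness and the recovery sequence fills in the details that the paper leaves to the reader by saying the $\limsup$ ``follows by adopting in similar fashion arguments from the previous section.'' Your recovery step via the strong convergence $G_h(u_h)\to\nabla v$ in $L^p$ (using \eqref{eq:R_bound} to kill $R_h(u_h)$) is a slightly more direct variant of what the paper does with $\nabla w_h$, but the idea is the same.
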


%The proof of Theorem \ref{thm:main} is a direct consequence of the fact that $\mathcal{E}_h$ $\Gamma$-converges to $\mathcal{E}^{\rm qc}$ with respect to the strong topology of $L^p(\Omega)$. The remainder of this section is devoted to the proof of the $\Gamma$-convergence result. We start by proving compactness in an appropriate topology for sequences of bounded energy. 

\section{Computational Results}
\label{sec:computations}

In this section it is illustrated how the proposed numerical scheme approximates minimisers for the 
total potential energy when 
polyconvex (and hence quasiconvex) and multi-well strain energy functions are employed. In all computations we use the scheme described in Section 3, see \eqref{eq:energy_discrete}.  For the first case we compare our 
scheme with a generalisation of the standard $L^2$ interior penalty method for convex energies, 
\cite{GMKV2023},  while for the latter case we show how a sequence of discrete minimisers is formed 
which converges to the minimiser of the relaxed problem.

In the sequel $y :\R^2\supset\Omega  \rightarrow \Omega^*$ will denote 
plane deformations from $\Omega$ (undeformed body) to $\Omega^*\subset \R^2$ (deformed configuration). Under given boundary 
conditions we seek deformations that minimise the total potential energy
\begin{align}
\min_{\substack{ y \in W^{1,p}(\Omega) \\ y = y_0 \text{ on } \partial \Omega} } \mathcal{E}( y) =  \min_{\substack{ y \in W^{1,p}(\Omega) \\ y = y_0 \text{ on } \partial \Omega} }
\int_\Omega W(\nabla y) dx,
\label{sim:energMin}
\end{align}
where $W$ is the strain energy function characterising the material properties.

\subsection*{Examples with a polyconvex strain energy function}
We would like to approximate minimisers of eq.~(\ref{sim:energMin}) through the discretised energy (\ref{eq:energy_discrete}) for the polyconvex strain energy function  $W(F) = |\det F|^2$. We compare the 
proposed penalty of eq.~(\ref{eq:penalty}) with an appropriate penalty for convex energies, 
\cite{GMKV2023}, given by
\begin{align}\label{eq:convexpen}
{\rm Pen}(u_h) & := \left(1 + |u_h|^{p-2}_{W^{1,p}(\Omega,T_h)} \right) \left(\sum_{e\in E_h} \frac{1}{h_e^{p-1}}\int_e |\jumpop{u_h}|^p \right)^{\frac2p},
\end{align}
which for $p=2$ reduces to the standard $L^2$ error. In eq.~(\ref{sim:energMin}) we choose $y_0(x) = F_0 x$ where
\begin{align}
    F_0 = \begin{pmatrix}
1 & 0\\
0 & 0.9
\end{pmatrix}.
\end{align}
corresponding to uniaxial compression. Here $y_0$ is a homogeneous deformation,
therefore polyconvexity implies that
\begin{align}\label{eq.polineq}
\int _\Omega W(\nabla y) \ge \int_\Omega W(\nabla y_0)  \text{ for all } y \in W^{1, 4}(\Omega) \text{ with }
y = y_0 \text{ on } \partial \Omega,
\end{align}
which means that every admissible deformation results in a potential energy no  less than that of $y_0$. We seek discrete minimisers in $V_h^1(\Omega)$, i.e. piecewise polynomials of first degree.
The structure of the described minimisation problem indicates that for big enough stabilisation parameter $\alpha$ in 
eq.~(\ref{eq:energy_discrete}), the computed minimiser should satisfy inequality \eqref{eq.polineq}. As it is 
illustrated in fig.~\ref{W_over_alpha} where the blue curve approaches the black dashed line, large values of the stabilisation parameter $\alpha$ are required to approach $\int_\Omega W(\nabla y_0)$ when the penalty of eq.~\eqref{eq:convexpen} is used. 
On the contrary for the penatly of eq.~\eqref{eq:penalty}, even small values of $\alpha$ bring the discrete minimiser close to $W^{1,4}(\Omega)$, i.e. the orange curve is always close to the black dashed line.
To examine further this behaviour we plot $1/\det \nabla y$ in the deformed configuration, 
fig.~\ref{fig:compression_dens}. For both penalties $y_h$ approaches $y_0$ as $\alpha$ increases but for the
term \eqref{eq:convexpen} large values of $\alpha$ are required while for \eqref{eq:penalty} even small
values produce a minimiser close to $y_0$. These observations are quantified in fig.~\ref{fig_errors}
where we compare the $L^1(\Omega)$ and $W^{1,1}(\Omega)$ errors 
between $y_0$ and the computed minimisers $y_h$ varying the penalty parameter $\alpha$.

\subsection*{Non rank-one convex energies and minimising sequences }

We construct a frame indifferent two-well strain energy function where under specific boundary conditions a phase mixture
emerges with finer and finer microstructure, approaching the infimum of the total potential energy as we decrease the 
mesh size $h$. Mimicking the cubic to
orthorhombic phase transitions in crystals we define the energy $W(F) = |C-V^2| |C-I|^2$, where $C = F^T F$ is the right
Cauchy-Green tensor, $I$ is the identity and
\begin{align}
    V = \begin{pmatrix}
\frac{a_o+ b_o}{2} & \frac{b_o -a_o}{2}\\
\frac{b_o - a_o}{2} & \frac{a_o+ b_o}{2}
\end{pmatrix}.
\end{align}
Choosing  $a_0=\sqrt{2 -b_0^2}$ and $ b_0 = 0.9$,
equation $R V - I = d \otimes n$ has two 
solutions namely
$\{R_1, d_1, n_1\}$, $\{R_2, d_2, n_2\}$ such that $n_1 \approx (1,0)$ and $n_2 \approx (0,1)$, where $R_i \in SO(2)$, see \cite[Proposition 4]{ball1989fine}.
This essentially means, that there exist continuous deformations with discontinuous deformation gradients across the 
planes defined by $n_1$ and $n_2$. Note that $W$ is zero when the deformation gradient lies at $SO(2) V$ or at $SO(2) I$. Now we define $y_0$ in eq.\eqref{sim:energMin} such that
\begin{align}
    \nabla y_0 = 0.5 I + 0.5 R_1 V,
\end{align}
and we initialise $y_h$ by the interpolation $y_h = I_h^1 y_0$, where 
$I_h^1 : W^{1,p}(\Omega) \rightarrow V_h^1(\Omega)$.
Discretising the domain through equal squares with normals $n_1$ and $n_2$, where each rectangle is divided to four triangles, perfect interfaces between 
discontinuous deformation gradients (phases) can occur due to this triangulation. For a comparison of meshes with or without the above property see 
\cite[Section 4.5]{grekasPhD}. 
Minimising the discrete version of eq.~\eqref{sim:energMin} the total potential energy 
and $||y_h - y_0||_{L^2(\Omega)}$ decrease 
for smaller mesh sizes, fig.~\ref{energy_over_resolution}.  The energy is reduced through a minimising 
sequence as it is depicted in fig.~\ref{fig:minimizing_seq_def}, describing the deformed state. 
  In some regions perfect interfaces with 
zero contribution are formed between $\nabla y_h=I$ and $\nabla y_h = R_1V$ 
across the direction $n_1 = (1,0)$  and between $\nabla y_h = I$ and $\nabla y_h = R_2 V$ 
across the direction $n_2 = (0,1)$. These regions illustrate the minimising sequence in the reference configuration, 
 see fig.~\ref{fig:minimizing_seq}.
Note that $R_1 V$ is not rank-one connected to $R_2 V$, $R_1 \ne R_2$. Hence, a transition layer
is formed between these two deformations, green color in fig.~\ref{fig:minimizing_seq}. This is the only 
non-zero part of the energy. As finer and finer microstructures appear on each side of a green curve 
$y_h \rightarrow y_0$ in $L^\infty(\Omega)$ as $h\rightarrow 0$, which means a ``macroscopic'' deformation is formed equal to $y_0$ and 
the energy at the transition layer tends to zero as $h \rightarrow 0$, see also fig.~\ref{fig:overLine}. In this case, the discrete 
minimiser captures a minimising sequence $\{ y_k\} \in W^{1,\infty}(\Omega)$ such that $y_k\stackrel{\ast}{\rightharpoonup} y_0$ in $W^{1, \infty}(\Omega)$
where $\{ \nabla y_k\} $  generates the homogeneous Young measure 
$\nu_x = 0.5 \delta_I + 0.5 \delta_V$. Note that $y_0$ is the minimiser of the relaxed problem.

The above simulations have been performed by choosing $\alpha=80$ in eq.~\eqref{eq:energy_discrete}. Note that, for the proposed multi-well energy 
$$ -1 +  |F|^8  \lesssim W(F) \lesssim  1 +  |F|^8.$$
We have noticed that when $p=8$ roundoff errors affects the discrete minimisation process. Note that
the second term of the proposed penalty, eq.\eqref{eq:penalty}, is getting less important numerically, due to the fact that the 
power of $1/8$ appears and small numbers are described by finite precision. One can rewrite the jump terms of eq.~\eqref{eq:energy_discrete} as
\begin{equation}
\begin{aligned}
jumps =   \sum_{e\in E_h} h_e \int_e \left|\frac{\jumpop{u_h}}{h_e} \right|^p ds, 
\end{aligned}
\end{equation}
and the penalty term as 
\begin{equation}
\begin{aligned}
\alpha{\rm Pen}(u_h) = \frac{\alpha}{2}\left(\int_\Omega 1 + W(\nabla u_h)dx + jumps \right)^\frac{p-1}{p} \left( 2^p jumps \right)^\frac{1}{p}.
\end{aligned}
\end{equation}
Under the above modifications we have obtained minimisers that capture minimising sequences of the continuous problem. 
The above minimising sequence appears for both $y_h \in V_h^q(\Omega) \cap C^0(\Omega)$ and $y_h \in V_h^q(\Omega)$, where both
converge to the minimiser of the relaxed problem, since the proved convergence results hold for conforming finite element spaces as well.
%Then, the discrete minimisation process could result in homogeneous deformations, which has bigger elastic energy 
%that the derived laminates of fig.~\ref{fig:minimizing_seq_def}, or even incompatible deformations,
% with discontinuous deformation gradient that are not allowed in $W^{1, 8}(\Omega)$.
%Passing the parameters $\alpha$ inside this power, i.e.   
%\begin{equation}
%\begin{aligned}
%\alpha {\rm Pen}(u_h) =
% &\left(1 + \sum_{K \in T_h} \int_K  W(\nabla u_h)  \sum_{e\in E_h} \frac{1}{h_e^{p-1}}\int_e |\jumpop{u_h}|^p\right)^{\frac{p-1}{p}} 
%  \\
% & \left(\alpha^p \sum_{e\in E_h} \frac{1}{h_e^{p-1}}\int_e |\jumpop{u_h}|^p \right)^{\frac1p},
%\end{aligned}
%\end{equation}

\begin{figure}%[H]
\centering
 \includegraphics[width=0.7\textwidth]{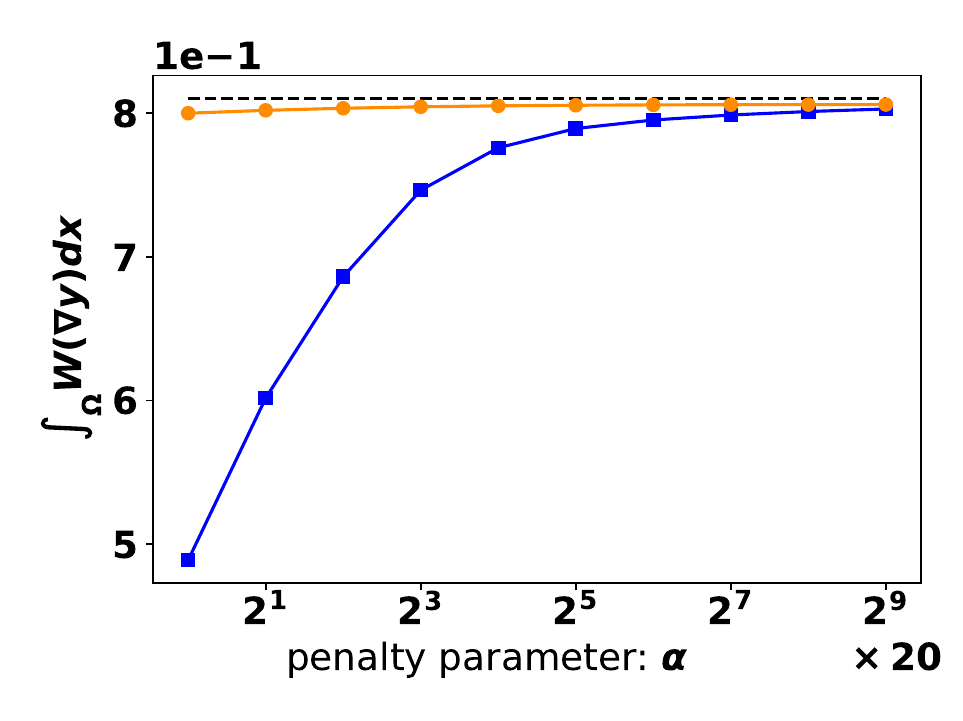}
  \caption{Total potential energy (Vertical axis) for the homogeneous deformation $y_0(x) = F_0 x$ (dashed black line),
  for the computed minimiser with respect to the stabilisation parameter $\alpha$ employing the penalty of eq.~\ref{eq:convexpen}
  (blue squares) and the penalty of eq.~\ref{eq:penalty} (orange circles).
  } 
  \label{W_over_alpha}
\end{figure}

\begin{figure}%[tbhp]
\centering
% \hspace{-0.13cm} 
\subfloat[]{\hspace{-0.8cm} \includegraphics[width=0.55\linewidth]{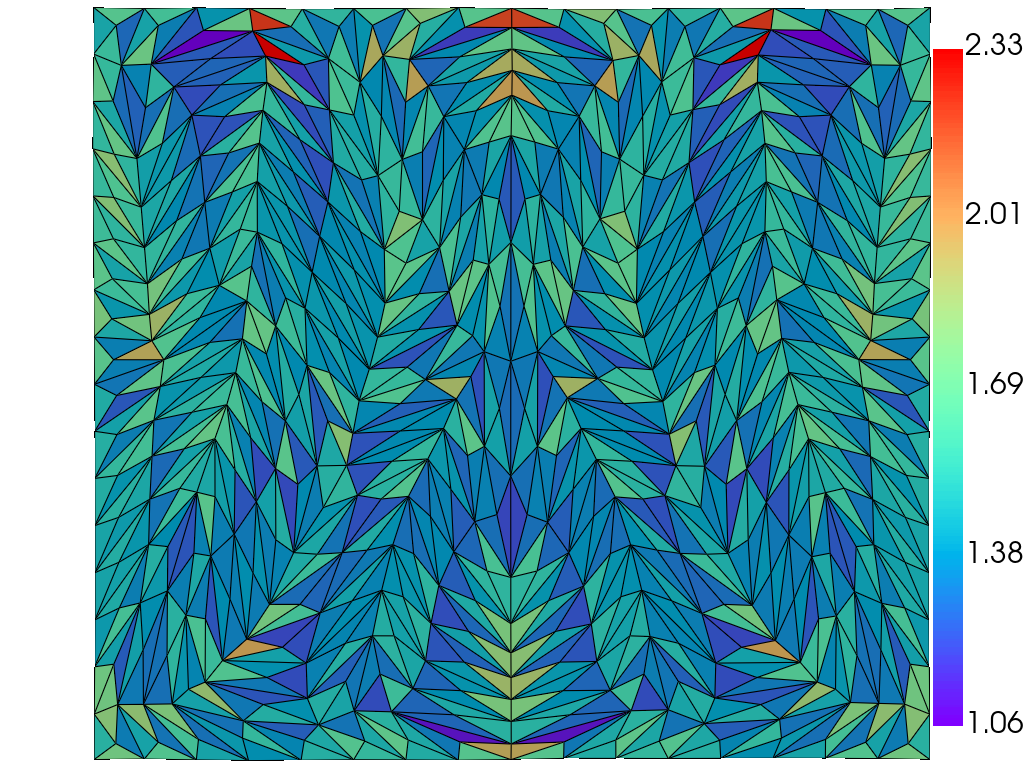} 
\label{fig:convex_a1} }
\subfloat[]{\hspace{-0.2cm} \includegraphics[width=0.55\linewidth]{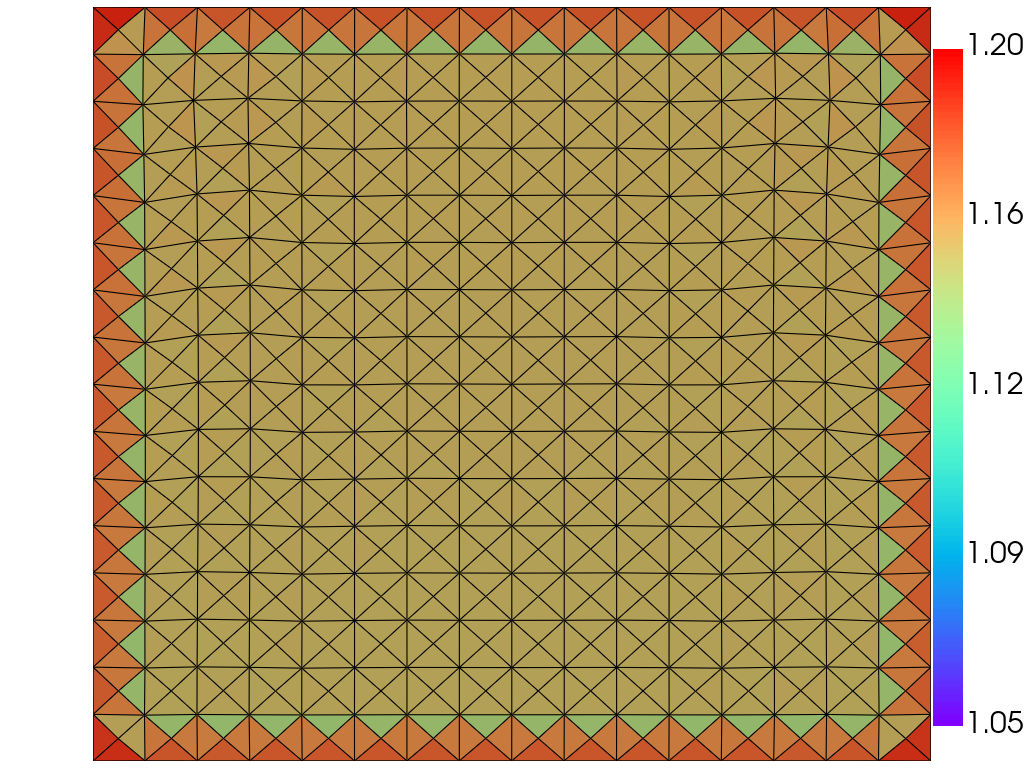}
\label{fig:convex_a4} }
\\
\subfloat[]{\hspace{-0.7cm}\includegraphics[width=0.55\linewidth]{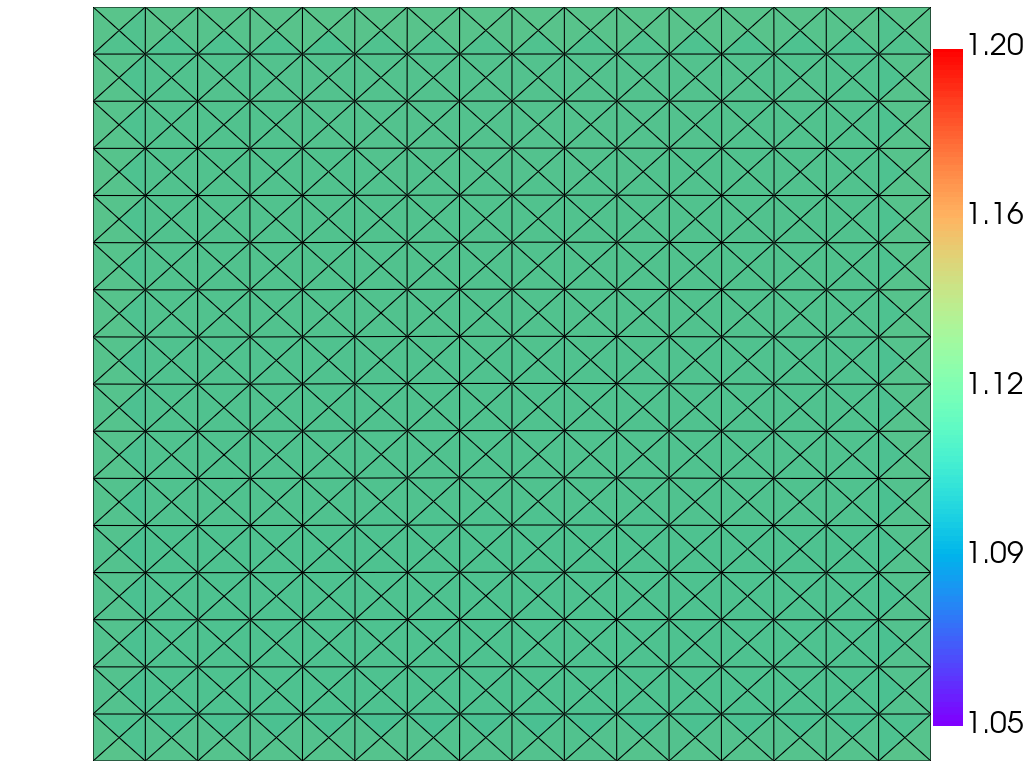} 
\label{fig:quasiconvex_a1} }
% \hspace{0.01cm}
\subfloat[]{\hspace{-0.2cm}\includegraphics[width=0.55\linewidth]{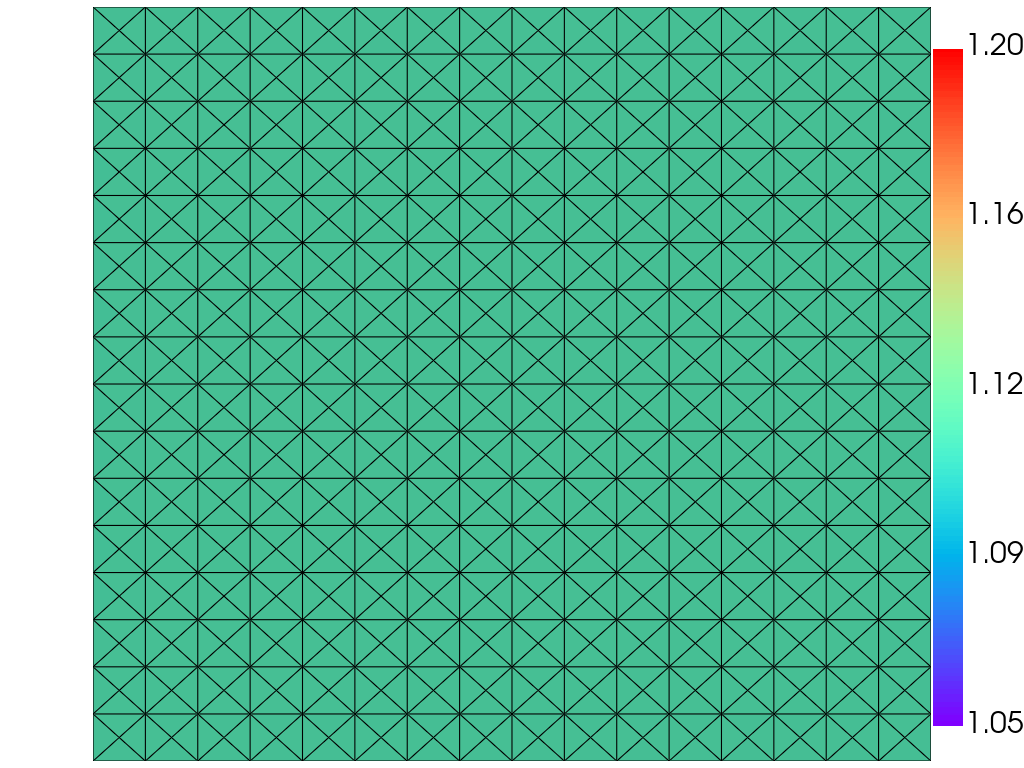} 
\label{fig:exact} }
\caption{ Density ($1/\det \nabla{y}$) in the deformed configuration under uniaxial compression (10\% strain). 
Employing the penalty of eq.~\eqref{eq:convexpen}, the density for the computed minimisers are illustrated: \protect\subref{fig:convex_a1} when $\alpha=20$ and 
\protect\subref{fig:convex_a4} when $\alpha =160$ (see blue squares of Figs.~\ref{fig:fn1c} and \ref{fig:fn1d} at $\alpha =20, 160$).
\protect\subref{fig:quasiconvex_a1}: Computed solution when the proposed penalty of eq.\eqref{eq:penalty} is used for $\alpha=20$. \protect\subref{fig:exact} Density of the exact homogeneous minimiser $y_0$. 
}
\centering
\label{fig:compression_dens}
\end{figure}

\begin{figure}%[H]
\centering
  \subfloat[]{ \includegraphics[width=0.5\linewidth]{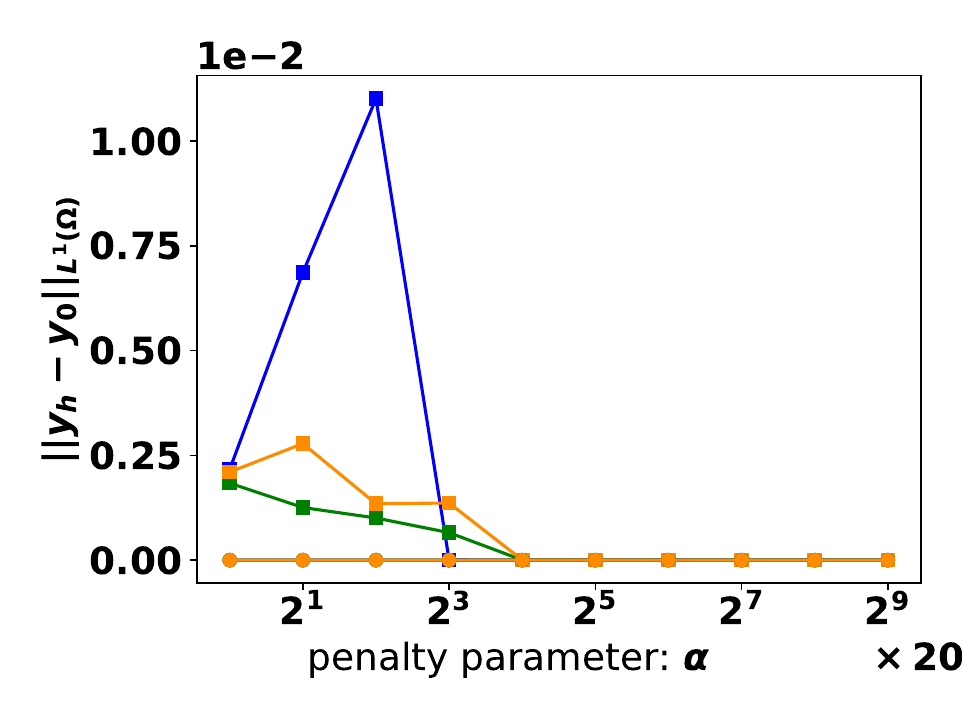}
  \label{fig:fn1c} }
 \subfloat[]{ \includegraphics[width=0.5\linewidth]{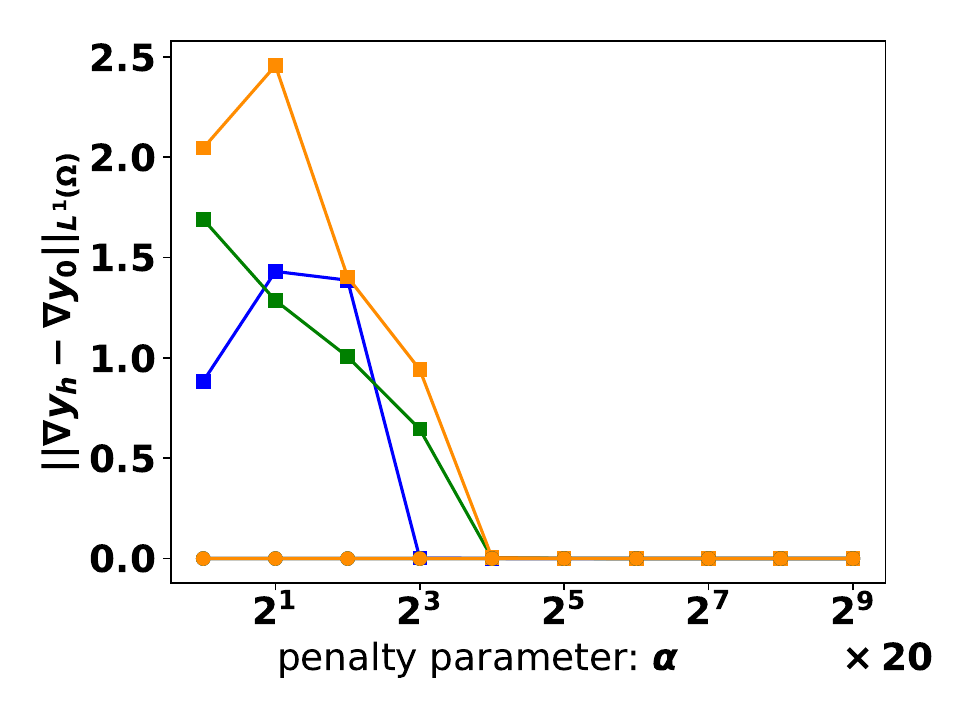}
 \label{fig:fn1d} 
  }
  \caption{
  Comparing the accuracy  of the two proposed stabilisation penalty terms \eqref{eq:penalty} (circles) and 
  \eqref{eq:convexpen} (squares) for   
  the polyconvex $W(F) = |\det F|^2$  strain energy under
  10\% uniaxial compression.
  Horizontal axis: values of the penalty parameter $\alpha$. 
  Vertical axes: $|y_h - y_0|_{L^1(\Omega)}$ and  $|y_h - y_0|_{W^{1,1}(\Omega)}$ errors 
  for various mesh resolutions, specifically for 1024 (blue), 2034 (green) and 4096 (orange) triangles,
  where $y_h$ denote the numerical solutions and $y_0$ the  homogeneous minimiser. 
  In \protect\subref{fig:fn1c} circular error belongs in the range of 
  $10^{-8}$ and $10^{-9}$, while
  in   \protect\subref{fig:fn1d} circular errors are of the order 
  $10^{-6}$.
 } 
  \label{fig_errors}
\end{figure}

\begin{figure}%[H]
\centering
 \includegraphics[width=0.7\textwidth]{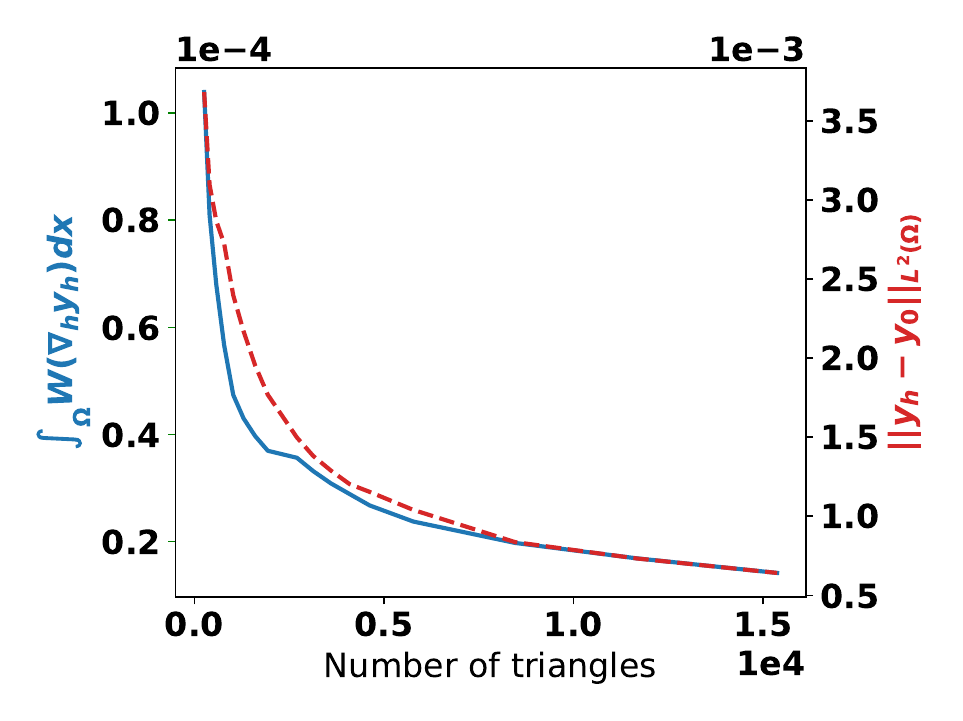}
  \caption{Imposing the homogeneous boundary conditions $y_0(x) = (0.5R_1 V
  +0.5 I)x$, minimisers are computed varying the mesh resolution (horizontal axis).
  Blue curve denotes the
  total potential energy (scale of left vertical axis) and red dashed curve is the 
  error $||y_h - y_0||_{L^2(\Omega)}$ (scale of right vertical axis)
  where $y_h$ denote the computed minimiser.
  } 
  \label{energy_over_resolution}
\end{figure}

\begin{figure}%[tbhp]
\centering
% \hspace{-0.13cm} 
 \subfloat[]{\hspace{-0.8cm} \includegraphics[width=0.55\linewidth]{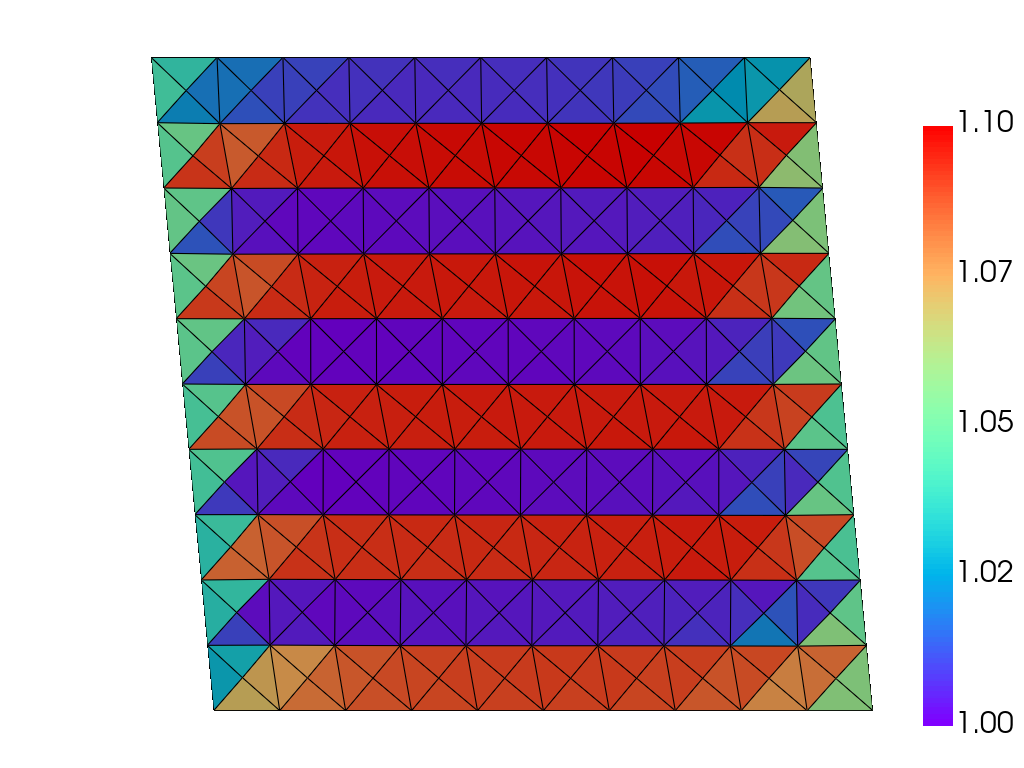} 
\label{fig:min_seq16} }
\subfloat[]{\hspace{-0.2cm} \includegraphics[width=0.55\linewidth]{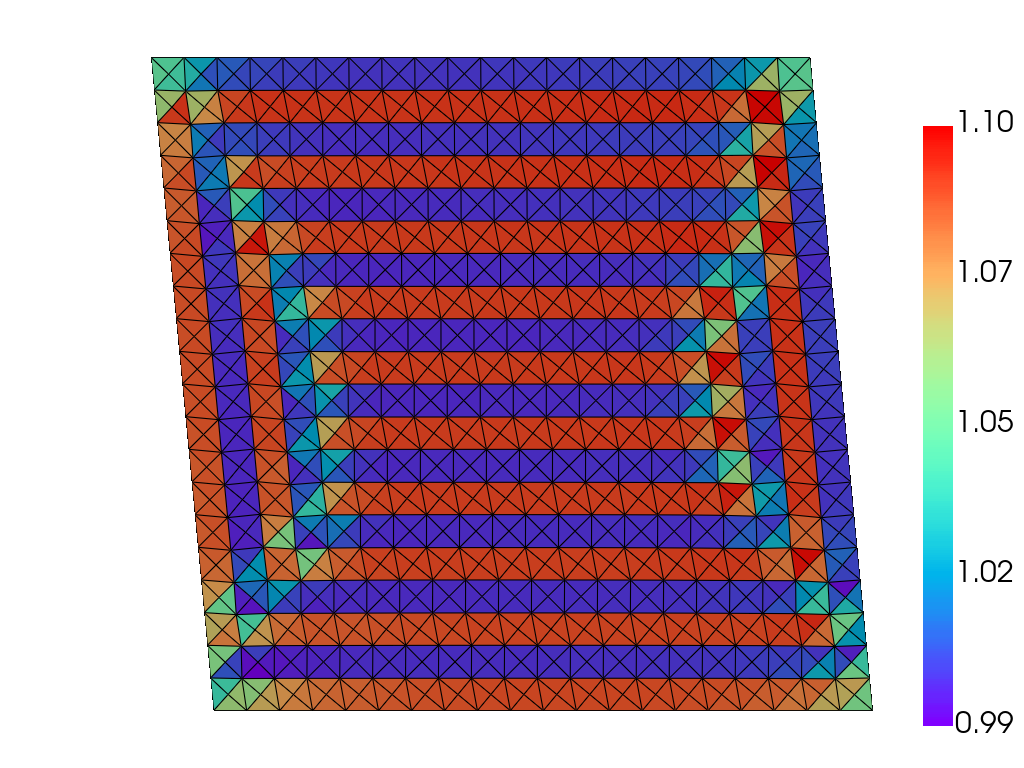}
\label{fig:min_seq32} }
 \\
\subfloat[]{\hspace{-0.7cm}\includegraphics[width=0.55\linewidth]{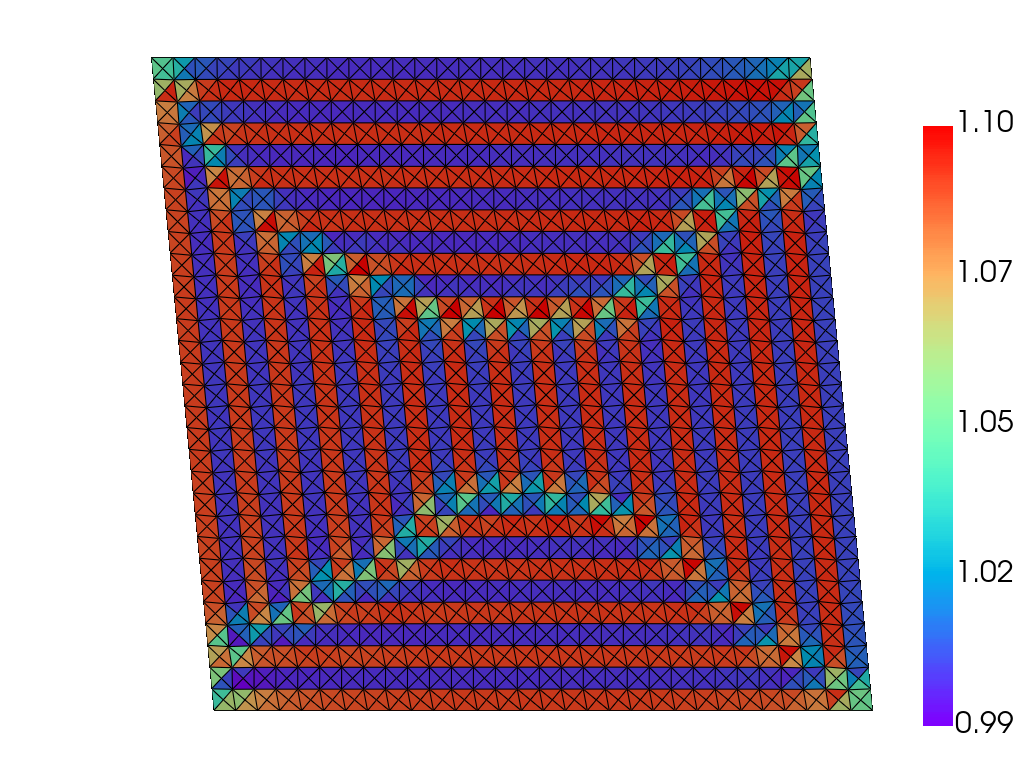} 
\label{fig:min_seq64} }
% \hspace{0.01cm}
 \subfloat[]{\hspace{-0.2cm}\includegraphics[width=0.55\linewidth]{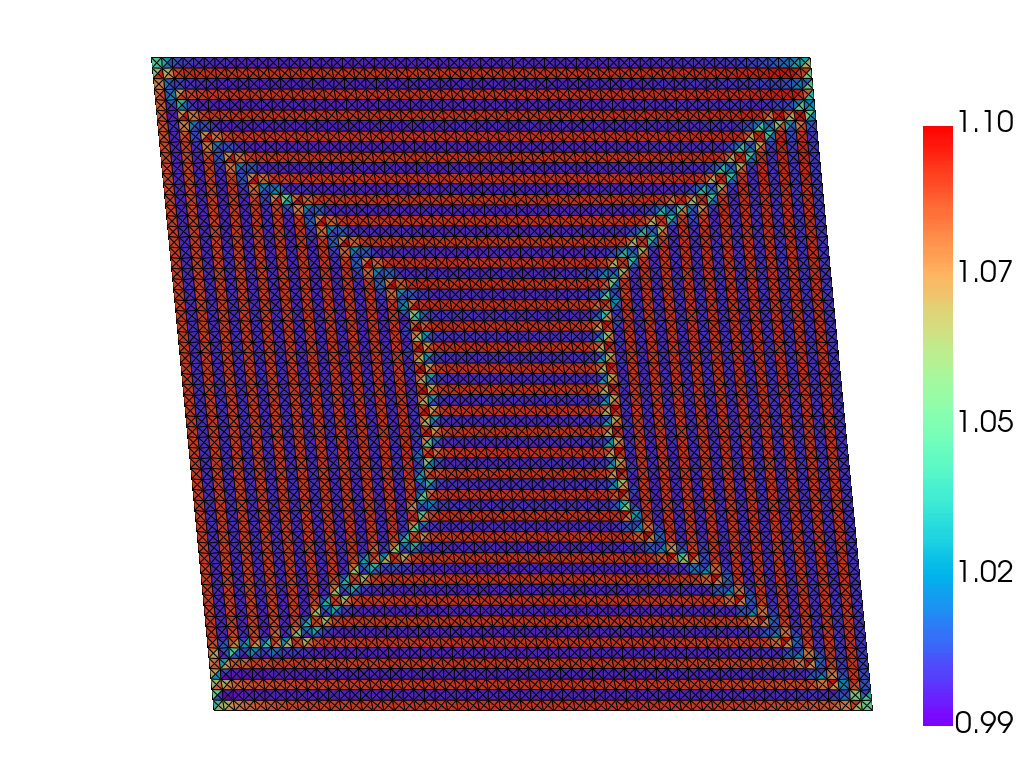} 
\label{fig:min_seq128} }
 \caption{The maximum eigenvalue ($\lambda_{max}$) in the deformed configuration of the computed minimiser for each triangle. The wells $I, V$ have the
 eigenvalues $(1,1)$ and $0.9, 1.09871212$ respectively. 
 In blue regions $\lambda_{max} =1$ which indicates that well $I$ is attained up to a rotation. 
 Similarly, red color, $\lambda_{max} =1.09871212$, corresponds to $R_i V$ up to a rotation, $i=1,2$.
  }
 \centering
 \label{fig:minimizing_seq_def}
\end{figure}

\begin{figure}%[tbhp]
\centering
% \hspace{-0.13cm} 
 \subfloat[]{\hspace{-0.8cm} \includegraphics[width=0.55\linewidth]{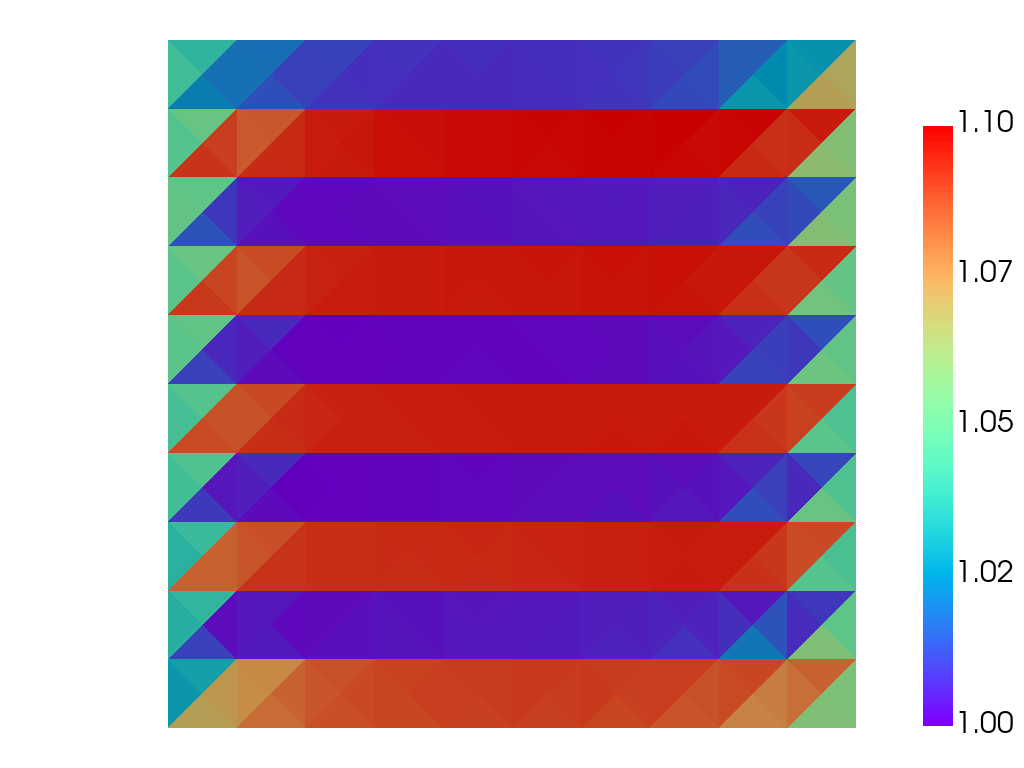} 
\label{fig:min_seq16} }
\subfloat[]{\hspace{-0.2cm} \includegraphics[width=0.55\linewidth]{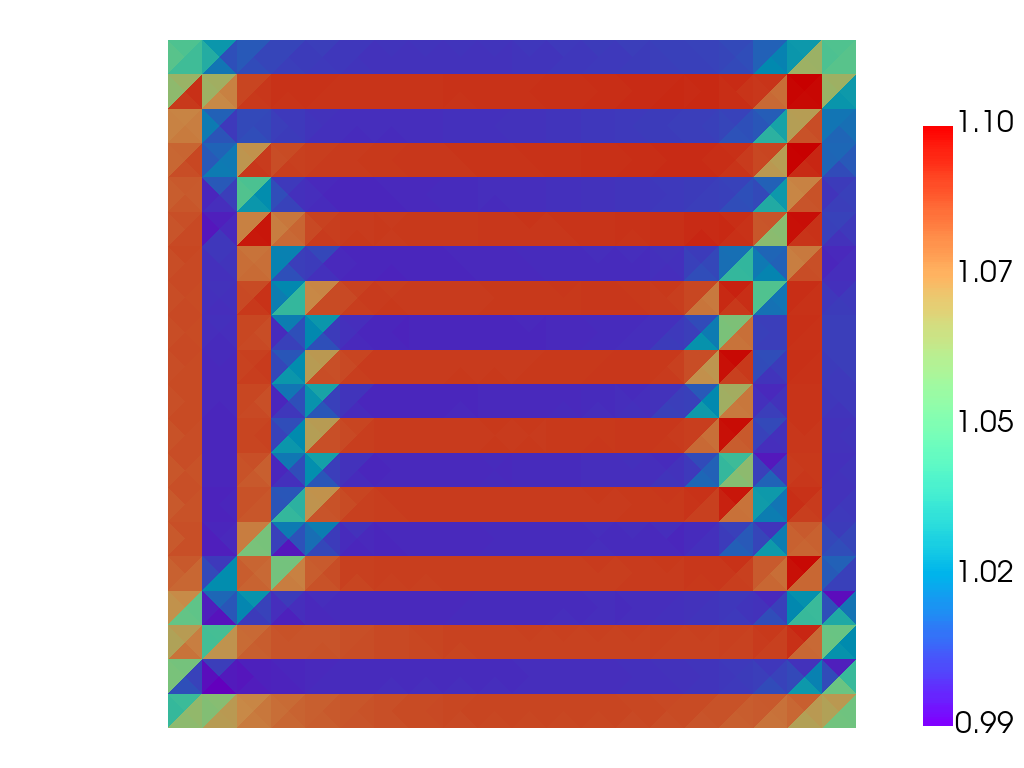}
\label{fig:min_seq32} }
 \\
\subfloat[]{\hspace{-0.7cm}\includegraphics[width=0.55\linewidth]{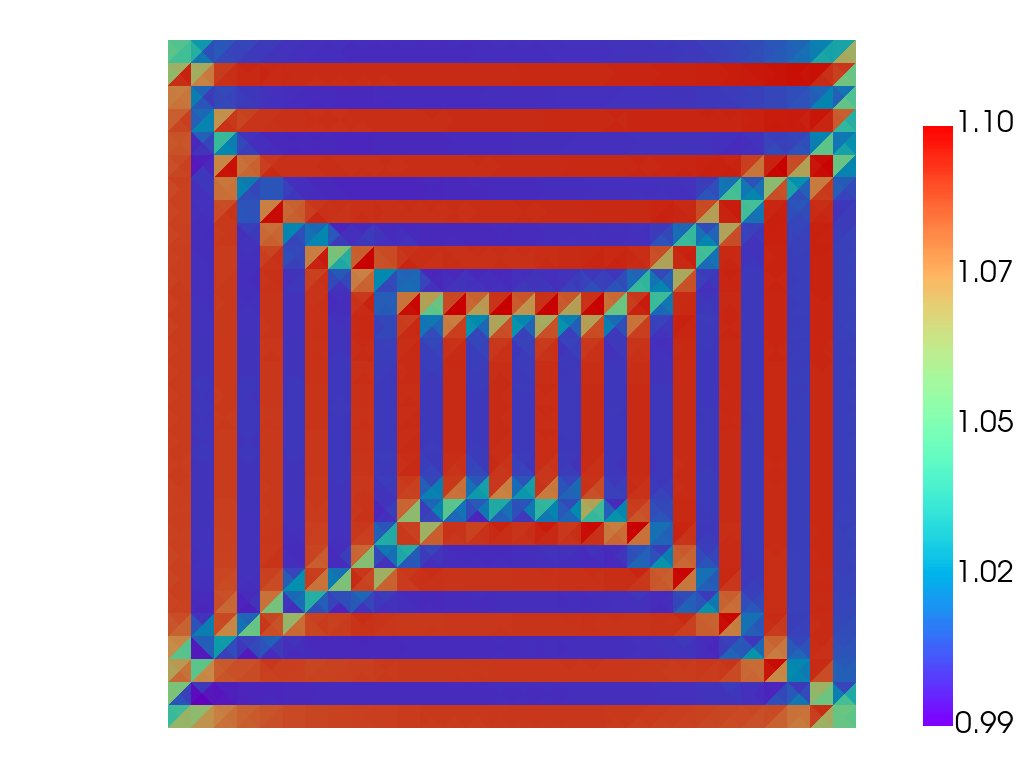} 
\label{fig:min_seq64} }
% \hspace{0.01cm}
 \subfloat[]{\hspace{-0.2cm}\includegraphics[width=0.55\linewidth]{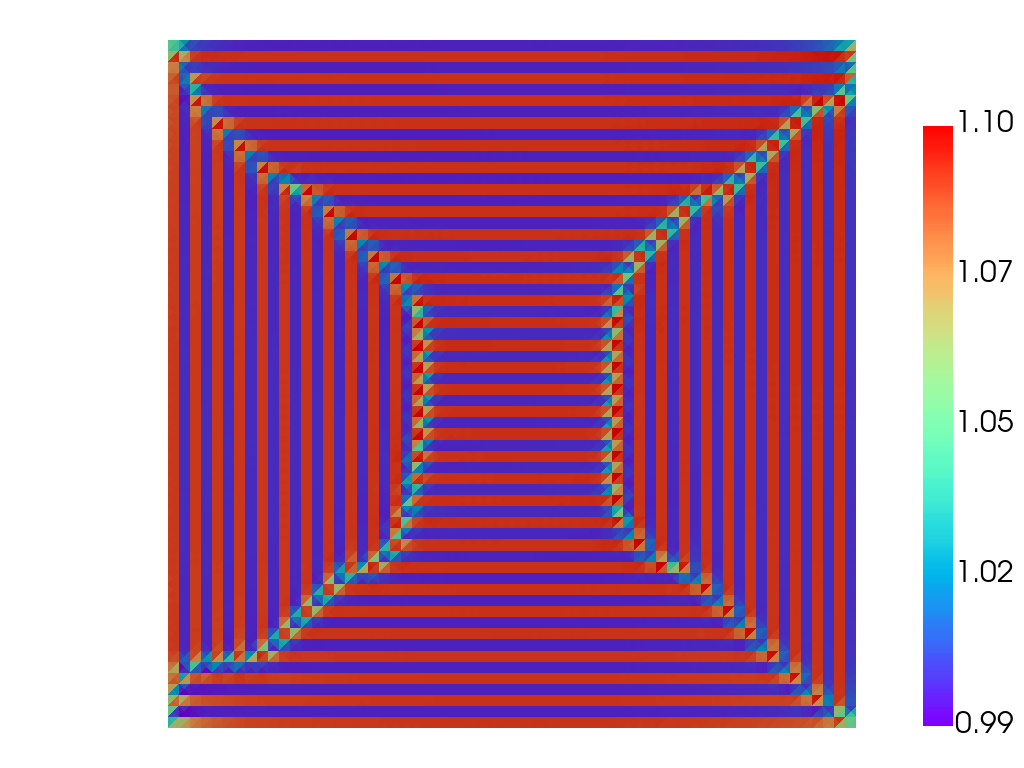} 
\label{fig:min_seq128} }
 \caption{The maximum eigenvalue ($\lambda_{max}$) of the discrete minimiser for each triangle expressed 
 in the reference configuration. The wells $I, V$ have the
 eigenvalues $(1,1)$ and $0.9, 1.09871212$ respectively. 
 In blue regions $\lambda_{max} =1$ which indicates the well $I$ appears up to a rotation. 
 Similarly, in red color $\lambda_{max} =1.09871212$ correspond  to $R_i V$ up to a rotation, where i=1 when the interface between the two phases is 
 normal to $n_1 =(1,0)$ and i=2 when the interface is normal to $n_2 =(0,1)$. 
  }
 \centering
 \label{fig:minimizing_seq}
\end{figure}

\begin{figure}%[tbhp]
\centering
% \hspace{-0.13cm} 
 \subfloat[]{\hspace{-0.8cm} \includegraphics[width=0.55\linewidth]{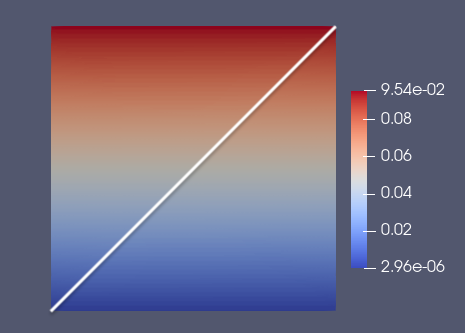} 
\label{fig:overLineu} }
\subfloat[]{\hspace{-0.2cm} \includegraphics[width=0.55\linewidth]{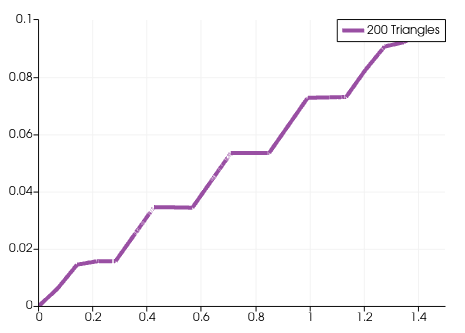}
\label{fig:overLine10} }
 \\
\subfloat[]{\hspace{-0.7cm}\includegraphics[width=0.55\linewidth]{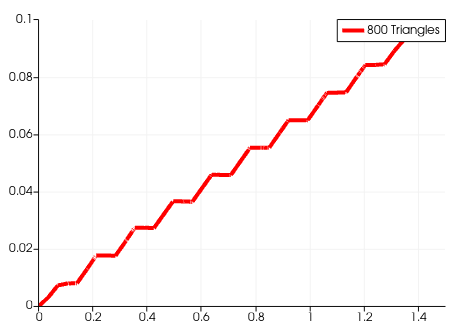} 
\label{fig:overLine20} }
% \hspace{0.01cm}
 \subfloat[]{\hspace{-0.2cm}\includegraphics[width=0.55\linewidth]{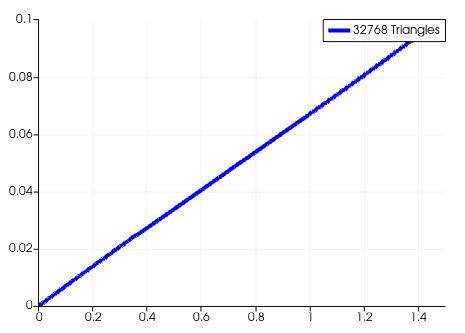} 
\label{fig:overLine128} }
 \caption{  Displacement of the computed minising sequence varying the mesh resolution. 
 \protect\subref{fig:overLineu}: Pointwise values of $|u| = \sqrt{u \cdot u}$, $u : \Omega \rightarrow \R^2$ is the displacement vector. 
 For 200, 800, and 32768 triangles the values of $|u|$ along the diagonal of the domain (white line in  \protect\subref{fig:overLineu})  are illustrated in 
 \protect\subref{fig:overLine10}, \protect\subref{fig:overLine20} and \protect\subref{fig:overLine128} respectively.}
 \centering
 \label{fig:overLine}
\end{figure}

 \clearpage

%\bibliographystyle{abbrv}
%\bibliographystyle{alpha}
%\bibliography{references.bib}
\printbibliography

\end{document}